\newcommand{\epf}{{{\hfill $\Box$ \smallskip}}}
\newcommand{\E}{\mathbb{E}}
\newcommand{\R}{\mathbb{R}}
\newcommand{\Z}{\mathbb{Z}}
\newcommand{\N}{\mathbb{N}}
\newcommand{\1}{\mathds{1}}
\newcommand{\Prb}{\mathbb{P}}
\newcommand{\varphiLaw}{\mathsf{Q}}
\newcommand{\Fprime}{\Theta}
\newtheorem{proposition}{Proposition}[section]
\newtheorem{corollary}{Corollary}[section]
\newtheorem{lemma}{Lemma}[section]
\newtheorem{theorem}{Theorem}[section]
\theoremstyle{definition}
\numberwithin{equation}{section}
\title[Differentiability of the Effective Lagrangian]{Differentiability of the effective Lagrangian for Hamilton--Jacobi--Bellman equations in dynamic random environments
}
\author{Yuri Bakhtin}
\address{Courant Institute of Mathematical Sciences, New York University, 251 Mercer St, New York, NY 10012, USA}
\thanks{The first author is grateful to NSF for partial support via grant DMS-1811444.}
\email{bakhtin@cims.nyu.edu}
\author{Douglas Dow}
\address{Courant Institute of Mathematical Sciences, New York University, 251 Mercer St, New York, NY 10012, USA}
\email{dd3103@cims.nyu.edu}
\subjclass[2020]{Primary 60K37, 82B44, 35B27, 49L12}
\begin{document}

\maketitle
\begin{abstract}
 We prove differentiability of the effective Lagrangian for continuous time multidimensional directed variational problems in random dynamic environments 
 with
positive  dependence range in time. This implies that limiting fundamental solutions in the associated homogenization problems for HJB equations are classical.
\end{abstract}


%

\section{Introduction}
\subsection{Main results.}
In this paper, we consider random Lagrangian actions defined on paths $\gamma$ in dynamic random environments in $1+d$ dimensions ($d\in \N$) by
\begin{equation}
\label{eq:continuous-action}
A^t(\gamma)=\int_0^t L(\dot{\gamma}_s)ds + \int_0^t F(s,\gamma_s)ds,\quad \gamma:[0,t]\to\R^d.
\end{equation}
Here  $L:\R^d\to\R$ is a generalized kinetic energy function and $F:\R\times\R^d\to\R$ is a realization of a space-time stationary random potential.
We define the optimal action over paths connecting the origin at time $0$ and a point $x$ at time $t$ by 
\begin{equation}
\label{eq:optimal-action}
A(t,x)=\inf_{\gamma:\gamma(0)=0,\gamma(t)=x}A^t(\gamma).
\end{equation}

The main goal of this paper is to prove that  for
a large class of optimal Lagrangian actions defined by \eqref{eq:continuous-action} and~\eqref{eq:optimal-action},
the effective Lagrangian $\Lambda:\R^d\to\R$ defined by
\[
\Lambda(v)=\lim_{T\to\infty}\frac{1}{T}A(T,Tv),\quad v\in\R^d,
\]
is nonrandom, convex, and \textbf{differentiable} at all $v\in\R^d$.
The precise assumptions and rigorous statements (Theorems~\ref{thm:shapeTheorem} and~\ref{thm:mainDiffTheorem}) are given in Section~\ref{sec:setup}. We also provide a formula for
$\nabla \Lambda(v)$ valid for all $v\in\R^d$.

Under the assumptions we impose, $A(t,x)$ can be viewed as the fundamental viscosity solution of the associated Hamilton--Jacobi--Bellman (HJB) equation
with Hamiltonian given by the Legendre--Fenchel transform of~$L$. 
Our main result can be stated as a homogenization theorem with an additional regularity property: the rescaled HJB solution $\epsilon A(t/\epsilon,x/\epsilon)$ converges to a limit which can be viewed as the
fundamental solution
of an effective HJB equation; moreover, the limiting fundamental
solution is $C^1$, i.e., {\bf classical}. Equivalently, in the homogenized limit the effective Hamiltonian is strictly convex.  The precise statement (Theorem~\ref{th:main-HJB}) is also given in Section~\ref{sec:setup}.

\subsection{Background.}
For various random environment models, the minimal cost, or action $A(x)$ of a path connecting the origin to a point $x$ in a Euclidean space grows asymptotically linearly in $|x|$, with rate of growth
depending on the direction or slope~$v$. In other words, the limit 
\[
\lim_{T\to\infty}\frac{1}{T}A(Tv)\to \Lambda(v)
\]
exists and is finite $\Prb$-almost surely. Results of this kind may be  interpreted in terms of random growth or front propagation in random media.

The existence of a deterministic limit $\Lambda$ and its convexity are consequences of the subadditive ergodic theorem.
For First Passage Percolation (FPP) and Last Passage Percolation (LPP) models, see, e.g.,  \cite{AuffingerDamronHanson_50Years:MR3729447},\cite{Damron:MR3838442}, \cite{JRAS-JEMS}, and references therein.
The limiting function~$\Lambda$ is known as the {\it shape function} or, in the context of stochastic HJB equations, the {\it effective Lagrangian}.
In the latter setting, such limits are tightly related to the homogenization phenomenon. For the case of a dynamic random environment, they were established in 
\cite{Schwab:MR2514380}, \cite{BCK:MR3110798},
\cite{kickb:bakhtin2016}, \cite{Seeger:MR4266240}. Similar results
for HJB equations with positive viscosity have been obtained in  
\cite{Kosygina-Varadhan:MR2400607},     
\cite{Jing-Souganidis-Tran:MR3602941},
\cite{Bakhtin-Li:MR3911894},
 \cite{JRAS:https://doi.org/10.48550/arxiv.2211.06779}.

Shape functions have been computed (in some cases only up to an unknown parameter) for a small number of models allowing for a group of precise 
distributional symmetries or even exact solvability, see \cite{Hammersley:MR0405665},  \cite{Rost:MR635270}, \cite{Aldous-Diaconis:MR1355056}, 
\cite{HoNe3},
\cite{Baryshnikov:MR1818248},\cite{Gravner-Tracy-Widom:MR1830441}, \cite{Hambly-Martin-O'Connell:MR1935124}, \cite{Moriarty-O'Connell:MR2343849},
\cite{CaPi}, \cite{Seppalainen:MR2917766}, \cite{BCK:MR3110798},
\cite{kickb:bakhtin2016}, \cite{Bakhtin-Li:MR3911894}, \cite{JRAS:https://doi.org/10.48550/arxiv.2211.06779}, and a discussion in~\cite{BakhtinDow_Differentiability}. In particular, the quadratic kinetic energy
models considered in \cite{BCK:MR3110798}, \cite{kickb:bakhtin2016}, \cite{Bakhtin-Li:MR3911894}, and \cite{JRAS:https://doi.org/10.48550/arxiv.2211.06779} have a shear invariance property resulting in precisely quadratic effective Lagrangians.

A long-standing conjecture supported by these precise limits is that for a broad family of 
models essentially coinciding with the KPZ universality class, the shape functions are strictly convex and differentiable, i.e., they have no flat edges and no corners,
 see, e.g., \cite{BK18}.

Besides the handful of aforementioned explicit examples, there has been no progress on this conjecture until~\cite{BakhtinDow_Differentiability}, where differentiability of $\Lambda$ was shown for a model 
where the cost of a directed discrete time $\R^1$-valued path~$\gamma=(\gamma_0, \gamma_1,\dots,\gamma_n)$
is given
by
\begin{equation}
\label{eq:time-discrete-action}
A^n(\gamma)=\sum_{k=0}^{n-1} L(\gamma_{k+1}-\gamma_{k})+\sum_{k=0}^{n-1}F_k(\gamma_k)
\end{equation}
under broad assumptions on smooth nearest neighbor interaction energy $L$ and the requirement that 
the random potential $F$ is i.i.d.\ in time (in other words, $F$~is of zero dependence range, or {\it white} in time), smooth and stationary in space, and bounded from below.
A similar result was shown in~\cite{BakhtinDow_Differentiability} to hold for rates of growth of the free energy for 
positive temperature point-to-point 
directed polymers. For general~$L$, the model based on the time-discrete action \eqref{eq:time-discrete-action} is not shear-invariant but the white-in-time potential is, and this allows for estimates based on small
shear perturbations  of the environment.

\medskip

In this paper, we extend the shear method of \cite{BakhtinDow_Differentiability} in several directions and show that differentiability of the shape function holds in a broader setting. 
To start with, here we work in continuous time and in dimension $1+d$ for arbitrary $d\in \N$. We consider
variational problems for the action functional defined on continuous time paths by~\eqref{eq:continuous-action},
 impose certain nonrestrictive requirements --- twice differentiability, convexity, and a mild restriction on the character of growth at infinity --- on the generalized kinetic energy $L$,
and assume that the random environment is given by a space-time stationary random potential $F:\R\times \R^d\to\R$ with $C^2$ realizations.
We describe a class of random fields $F$ that allows for arbitrary values, not necessarily bounded from below. Most importantly, all these random fields 
have positive dependence range, so they
are not white in time. Thus, in contrast with the model studied in \cite{BakhtinDow_Differentiability}, shear invariance does not hold
even for the potential~$F$ in the model we consider in the present paper.
 The precise description of the setting and rigorous statements of our results are given in Section~\ref{sec:setup}.

Although our exposition is restricted to the setting described in Section~\ref{sec:setup},
our method also applies to various interpolations between that setting and the setting of~\cite{BakhtinDow_Differentiability}. For example, 
in the discrete time case, we also could consider non-i.i.d.\ potentials similar to those we consider here in continuous time. Or we could consider zero dependence
range models in continuous time, say, given by compound Poisson noise, etc. It is also clear that for various other continuous space models of
random growth and front propagation, 
once the existence of a deterministic convex shape function is known, its
differentiability can be established with our method.

The advantage of the setting adopted in the present paper is direct applicability of our results to homogenization for HJB equations, see Theorem~\ref{th:main-HJB}.

{\bf Acknowledgments.} We thank
Scott Armstrong,  Elena Kosygina, Lenya Ryzhik, Panagiotis Souganidis, and Hung~Tran for their comments.

\section{Set-up}\label{sec:setup}

In this section, we introduce the model in detail and give rigorous statements of the results.

\subsection{Notational conventions.} 
Throughout, we fix the dimension $d\in \N$.
 When referring to the space $\R\times \R^d$, we will often use $t$, $s$, or $T$ to represent the first coordinate, which we often refer to as the time coordinate, and $x$ or $y$ to represent the last $d$ coordinates, which we often refer to as the spatial coordinates. We use $\langle x,y\rangle$ to denote the Euclidean inner product between two vectors $x,y\in \R^d.$ We denote by $\|x\|_p,$ $p\in [1,\infty]$, the $\ell^p$ norm of a vector $x\in \R^d.$ We denote by $\|M\|$ the operator norm of a matrix $M$. For a function $f$ whose domain $U$ is a subset of Euclidean space we use $\|f\|_{L^p}$ or $\|f\|_{L^p(U)}$ to denote the~$L^p$ norm of either the functions $|f|$, $\|f\|_2$ or $\|f\|$, depending on whether the range of $f$ is $\R$, $\R^d$ or $\R^{d\times d}.$ For a function $f:\R\times \R^d\to \R$ we denote by $\nabla f$ and $\nabla^2 f$ the gradient and Hessian of $f$ with respect to the spatial variable $x\in \R^d.$ We use $\partial_t f$ to denote the time derivative of such $f$. For a differentiable path $\gamma:[t,T]\to \R^d$ we denote by $\dot{\gamma}$ its time derivative.

\subsection{The variational problem}
We will work with a convex function $L:\R^d\to \R$ serving as a generalized kinetic energy and a space-time stationary random field $F:\R\times \R^d\to \R$ with $C^2$
realizations, which serves as the environment potential. We will discuss requirements on $F$ and $L$ in Sections
\ref{sec:F} and \ref{sec:reqs_on_L}.

The  set  of admissible paths from point $x\in\R^d$ at time $t\in\R$ to point $y\in\R^d$ at time $T>t$ is defined as
\[\Gamma_{x,y}^{t,T} = \{\gamma\in W^{1,1}([t,T];\R^d)\,:\,\gamma_t = x,\,\gamma_T = y\},\] 
where $W^{1,1}([t,T];\R^d)$ is the Sobolev space of measurable functions $\gamma:[t,T]\to \R^d$ with weak derivative $\dot\gamma$ in $L^1([t,T];\R^d).$ The derivative $\dot\gamma$ is then defined pointwise Lebesgue almost everywhere. For $\gamma\in W^{1,1}([t,T];\R^d),$ let
\begin{equation}\label{eq:AtTDef}
    A^{t,T}(\gamma) = \int_t^T L(\dot{\gamma}_s)ds + \int_t^T F(s,\gamma_s)ds,
\end{equation}
which is a slight generalization of~\eqref{eq:continuous-action}.
We are interested in minimal actions 
\begin{equation}\label{eq:p2pAction}
    A^{t,T}_{x,y} = \inf\big\{ A^{t,T}(\gamma)\,:\,\gamma\in \Gamma_{x,y}^{t,T}\big\},
\end{equation}
and our main results  concern the asymptotic behavior, as $T\to\infty$, of 
the optimal action among paths with average slope $v\in \R$ on $[0,T]$,
\begin{equation}\label{eq:AF_def}
    A_*^T(v) = A^{0,T}_{0,Tv}.
\end{equation}

\subsection{The Random Potential}
\label{sec:F}
 We consider random potentials that are constructed through a marked Poisson process. Though we give the rigorous details below, one can think of our random field as given by the following sum:
\begin{equation}\label{eq:Fsumrepresentation}
    F(t,x) = \sum_i \varphi_i(t-t_i,x-x_i)
\end{equation}
where $\{(t_i,x_i)\}_{i\in \N}$ are Poisson points on $\R\times \R^d$ and $\{\varphi_i\}_{i\in\N}$ is an i.i.d. family of $C^2$-functions with compact support satisfying certain assumptions. A specific case of our field $F$ is a mollification of a Poisson process on $\R\times \R^d$ (with Lebesgue reference measure) by a fixed $C^2$ function with compact support. In addition, our set-up allows multiplication of the mollifier by an independent random variable satisfying certain moment conditions. 
Further, we allow the mollifiers themselves to be drawn independently at each Poisson point. We will now detail the rigorous set-up that includes the preceding examples.

Fix a compact set $K\subset \R^{d+1}$ with non-empty interior, and let $\mathscr{C}$ be the space of continuous functions $f:\R\times \R^d\to \R$ that are twice differentiable in the spatial variable $x\in \R^d$ with continuous first and second spatial derivatives on $\R\times \R^d,$ and whose support is contained in $K$. The space $\mathscr{C}$ is a separable Banach space when endowed with the norm
\[\|f\|_{\mathscr{C}} = \|f\|_{L^\infty(K)} + \|\nabla f\|_{L^\infty(K)} + \|\nabla^2 f\|_{L^\infty(K)}.\]

Let $\varphiLaw$ be a probability measure on $\mathscr{C}$. We require 
\begin{equation}\label{eq:varphiMoments}
    \varphiLaw\exp\Big(\lambda\|\varphi_1\|_{\mathscr{C}}\Big) < \infty
\end{equation}
for some $\lambda > 0.$ Then, one can construct the random measure $\mathbf{N}$ defined on a complete probability space $(\Omega,\mathcal{F},\Prb)$ that is a Poisson process on $\R\times \R^d\times \mathscr{C}$ with intensity measure given by $dt\otimes dx\otimes \varphiLaw.$ We will use $\omega$ to denote an element of $\Omega$. If we choose an enumeration $\{(t_i,x_i)\}_{i\in\N}$ of Poissonian points on $\R\times \R^d,$ and take an i.i.d.\ sequence $\{\varphi_i\}_{i\in \N}$ with common distribution $\varphiLaw,$ then we have the alternative representation
\[\mathbf{N} = \sum_{i}\delta_{(t_i,x_i,\varphi_i)}.\]
We refer to \cite{Kingman:MR1207584} or \cite{Daley:MR1950431} for the theory of marked Poisson processes.

We define our field $F$ to be
\[F(t,x) = \int_{\R\times \R^d\times \mathscr{C}}\varphi(t-s,x-y)\mathbf{N}(ds,dy,d\varphi).\]
Often we will suppress the domain of integration when integrating with respect to~$\mathbf{N}$. Alternatively, we can represent $F$ as a sum over Poisson points on $\R\times \R^d$ mollified with an i.i.d. sequence of functions $\{\varphi_i\}_{i\in\N}$ with distribution $\varphiLaw$ as in \eqref{eq:Fsumrepresentation}.


\subsection{The generalized kinetic energy}
\label{sec:reqs_on_L}

We assume that $L:\R^d\to \R$ is convex and twice differentiable. Also, we assume
\begin{equation}\label{eq:superLinearGrowthAssumption}
    \lim_{\|v\|_2\to \infty}\frac{L(v)}{\|v\|_2} = +\infty
\end{equation}
and that there is a $\delta_0 \in (0,1)$ such that 
\begin{equation}\label{eq:secondDerivAssumption}
    \limsup_{\|v\|_2\to \infty}\sup_{\|r\|_2\le \delta_0}\frac{\|\nabla^2 L(v+r)\|}{L(v)} < \infty.
\end{equation}
Convexity of $L$ and \eqref{eq:superLinearGrowthAssumption} are needed to guarantee existence of minimizers in \eqref{eq:p2pAction}. The twice differentiable assumption is essential as we will use a second order Taylor approximation for $L$ in the proof of differentiability of the shape function. Condition~\eqref{eq:secondDerivAssumption} is a technical condition limiting the growth and oscillation of $L$ at infinity. We use \eqref{eq:secondDerivAssumption} to control the second derivative in a second order Taylor expansion.

For a concrete example, Lagrangians of the form
\[
L(v)=\sum_{k=2}^{p}a_k \|v\|_2^{p}
\]
with $p\in\N$, $a_p>0$, and  $a_0,a_1,\ldots, a_{p-1}\ge 0$, satisfy all our assumptions.

\subsection{Results}
We recall the definition \eqref{eq:p2pAction}--\eqref{eq:AF_def} and begin with a shape theorem for our model. 

\begin{theorem}\label{thm:shapeTheorem}
    There is a convex deterministic function $\Lambda:\R^d\to \R$ such that for every $v\in \R^d,$
    \begin{equation}\label{eq:shapeTheoremLimit}
        \Lambda(v) = \lim_{T\to \infty}\frac{1}{T}A_*^T(v)
    \end{equation}
    $\Prb$-almost surely.
\end{theorem}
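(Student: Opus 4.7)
The natural strategy is to recast the problem as a subadditive ergodic one and apply Kingman's theorem. For each fixed $v \in \R^d$ I would define $X_{m,n}(v) := A^{m,n}_{mv,\,nv}$ indexed by integers $0\le m\le n$, check subadditivity, stationarity, and integrability, apply Kingman, then pass to continuous $T$ and extract convexity.

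Subadditivity $X_{m,n}(v)\le X_{m,r}(v)+X_{r,n}(v)$ is immediate by concatenating admissible paths from $mv$ to $rv$ on $[m,r]$ with ones from $rv$ to $nv$ on $[r,n]$. For stationarity, the time-shift group $(\tau_h)_{h\in\R}$ defined by $(\tau_h\mathbf{N})(B):=\mathbf{N}\bigl(\{(s+h,y,\varphi):(s,y,\varphi)\in B\}\bigr)$ preserves $\Prb$ because the intensity $ds\otimes dy\otimes\varphiLaw$ is $\tau_h$-invariant, and $X_{m,n}(v,\tau_h\omega)=X_{m-h,\,n-h}(v,\omega)$ makes the row $(X_{k,k+1}(v))_{k\in\Z}$ stationary under $\tau_1$. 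Ergodicity (in fact, mixing) of the shift action on a Poisson process with translation-invariant intensity is standard and is what will deterministically fix the Kingman limit.

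The chief obstacle is integrability, namely $\E|X_{0,1}(v)|<\infty$. The upper bound uses the straight-line test path $\gamma_s=sv$ to give $X_{0,1}(v)\le L(v)+\int_0^1 F(s,sv)\,ds$; Campbell's formula yields $|\E F(0,0)|\le|K|\cdot\E\|\varphi_1\|_{\mathscr{C}}$, finite by \eqref{eq:varphiMoments}. The lower bound is the delicate part because $F$ need not be bounded below. My plan is first to use the superlinear growth \eqref{eq:superLinearGrowthAssumption} together with Jensen's inequality to show that any near-minimizer of $X_{0,1}(v)$ must stay inside a deterministic ball $B(v)\subset\R^d$ (otherwise the kinetic cost outweighs any possible potential savings, once the potential is controlled on a slightly larger region). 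Second, by the compact support of each $\varphi_i$ in $K$, the field $F$ on $[0,1]\times B(v)$ depends only on Poisson points in a bounded space-time set $\widetilde B$; the exponential-moment condition \eqref{eq:varphiMoments}, combined with standard tail control of Poisson counts in $\widetilde B$, then yields $\E\bigl(\sup_{[0,1]\times B(v)}|F|\bigr)^p<\infty$ for every $p\in[1,\infty)$, giving a lower bound $X_{0,1}(v)\ge L(v)-C\sup_{[0,1]\times B(v)}|F|$ whose expectation is finite.

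With integrability in hand, Kingman delivers $n^{-1}X_{0,n}(v)\to\Lambda(v)$ a.s.\ and in $L^1$, with $\Lambda(v)$ a deterministic constant by ergodicity. Extending from integer $n$ to continuous $T\in[n,n+1]$ is routine: compare $A_*^T(v)$ with $X_{0,n}(v)$ by appending a short bridge from $nv$ at time $n$ to $Tv$ at time $T$, whose action is controlled by the same moment estimates. For convexity, with $u=(1-\lambda)v+\lambda w$ and $T_1=(1-\lambda)T$, $T_2=\lambda T$, concatenation gives
\[
A^{0,T}_{0,Tu}\le A^{0,T_1}_{0,T_1 v}+A^{T_1,T}_{T_1 v,\,T_1 v+T_2 w}.
\]
Taking expectations, invoking space-time stationarity of $F$ to identify $\E A^{T_1,T}_{T_1 v,T_1 v+T_2 w}=\E A^{0,T_2}_{0,T_2 w}$, dividing by $T$, and passing $T\to\infty$ via the $L^1$ convergence from Kingman yields $\Lambda(u)\le(1-\lambda)\Lambda(v)+\lambda\Lambda(w)$. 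The hardest step will be the lower integrability bound, where the interplay between superlinear kinetic confinement of near-minimizers and Poisson shot-noise estimates on the potential must be made quantitative.
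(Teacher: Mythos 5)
Your high-level plan---subadditivity, stationarity, Kingman's subadditive ergodic theorem, then convexity by concatenation at two slopes---is the same as the paper's, and the upper integrability bound (straight-line path plus Campbell's formula) matches Lemma~\ref{lem:integrabilityOfPositiveAction}. Two points, however, need attention.

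First, a technical slip in the stationarity step: the pure time shift $\tau_h$ you define does not give $X_{m,n}(v,\tau_h\omega)=X_{m-h,n-h}(v,\omega)$. Under $\tau_h$ one has $F_{\tau_h\omega}(t,x)=F(t+h,x)$, so $A^{m,n}_{mv,nv}(\tau_h\omega)=A^{m+h,n+h}_{mv,nv}(\omega)$, whose boundary points $mv$ and $nv$ do not equal $(m+h)v$ and $(n+h)v$. The paper's transformation $\mathcal{T}_v^t$ shifts simultaneously in time by $t$ and in space by $tv$; only this combined shear-shift makes the array $(A^{m,n}_{mv,nv})$ stationary. Ergodicity of the combined shift still holds for the Poisson process, so this is an easy fix, but as written the identity is false.

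Second, and more seriously, the lower integrability bound as sketched has a genuine gap. You propose to confine near-minimizers of $X_{0,1}(v)$ to a \emph{deterministic} ball $B(v)$ and then bound the potential there. But $F$ is a Poisson shot noise and is almost surely unbounded in both signs on $[0,1]\times\R^d$; for any fixed ball there is, with probability one, a configuration of Poisson points outside it producing a well deep enough that the minimizer leaves the ball. So no deterministic confinement ball exists, and the appeal to ``the potential being controlled on a slightly larger region'' is circular: the region in which the minimizer is confined depends on the realization of $F$ on an a priori unbounded set. What one \emph{can} do is show that the minimizer lies in a random ball with good tail bounds, but this forces you to compare, for every $n$, the kinetic cost of visiting $n$ lattice boxes against the greatest negative potential achievable on $n$-box lattice animals---precisely the greedy-lattice-animal machinery developed in \Cref{sec:concentratioBounds} (Lemmas~\ref{lem:expMomentsF}, \ref{lem:pathWithSmallWeight}, \Cref{cor:pathWithSmallWeightApplication1}, \Cref{lem:lowerBoundAction}). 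The paper avoids proving $\E|A_*^1(v)|<\infty$ altogether: it checks only $\E[\sup_{t\le T}\max(A_*^t(v),0)]<\infty$ (Lemma~\ref{lem:integrabilityOfPositiveAction}), which is all the version of Kingman's theorem allowing a $[-\infty,\infty)$-valued limit requires, and then establishes $\Lambda(v)>-\infty$ separately via \Cref{lem:lowerBoundAction}. If you insist on two-sided integrability of $X_{0,1}(v)$, you will need essentially the same lattice-animal estimates, not the deterministic-ball shortcut you describe.
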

We sketch a fairly standard proof of this result 
in Section~\ref{sec:ShapeTheorem}.

\medskip

We will prove, see \Cref{lem:measurableSelection}, that there is a path $\gamma^T(v)$ (which can be chosen in a measurable way) realizing the infimum in \eqref{eq:p2pAction}--\eqref{eq:AF_def}. We will need this path in the statement of our main result on the differentiability of the shape function $\Lambda$ and a formula for the gradient of $\Lambda$.
Our formula for the gradient
of $\Lambda$ involves an auxiliary field $\Fprime:\R\times \R^d\to \R^d$ defined by
\begin{align}\label{eq:phiDef}
    \Fprime(t,x) &= \int (t-s)\nabla \varphi(t-s,x-y)\mathbf{N}(ds,dy,d\varphi)\\
    \notag
    &= \partial_v \int \varphi(t-s,x-y+v(t-s))\mathbf{N}(ds,dy,d\varphi)\Big|_{v=0}.
\end{align}

\begin{theorem}\label{thm:mainDiffTheorem}
    The function $\Lambda$ is differentiable on all of $\R^d$. In addition, for every $v\in \R^d,$ 
    \begin{equation}\label{eq:shapeFcnDerivative}
        \nabla \Lambda(v) = \lim_{T\to \infty}\frac{1}{T}\int_0^T \Big[\nabla L(\dot\gamma_t^{T}(v)) + \Fprime(t,\gamma^{T}_t(v))\Big]dt
    \end{equation}
    $\Prb$-almost surely.
\end{theorem}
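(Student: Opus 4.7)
The plan is to exploit a measure-preserving shear of the marked Poisson process that relates direction-$(v+\epsilon u)$ variational problems to direction-$v$ problems, and then extract differentiability of $\Lambda$ via Taylor expansion in $\epsilon$. Define the shear $\Phi^{\epsilon u}:(t,x,\varphi)\mapsto(t,\,x+\epsilon u t,\,\varphi)$ on $\R\times\R^d\times\mathscr{C}$. Because $x\mapsto x+\epsilon u t$ is a translation for each fixed $t$, the pushforward $\mathbf{N}^{\epsilon u}:=\Phi^{\epsilon u}_*\mathbf{N}$ has the same law as $\mathbf{N}$. Path conjugation $\gamma\leftrightarrow\tilde\gamma_t=\gamma_t+\epsilon u t$ bijects $\Gamma^{0,T}_{0,Tv}$ with $\Gamma^{0,T}_{0,T(v+\epsilon u)}$, and a direct computation shows that the minimum action for direction $v+\epsilon u$ computed in the sheared environment equals $\mathcal{B}^T(\epsilon):=\inf_{\gamma\in\Gamma^{0,T}_{0,Tv}}\mathcal{A}^T(\gamma;\epsilon)$, where
\[\mathcal{A}^T(\gamma;\epsilon):=\int_0^T\! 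L(\dot\gamma_t+\epsilon u)\,dt+\int_0^T\!\!\int\varphi(t-s,\,\gamma_t-y+\epsilon u(t-s))\,\mathbf{N}(ds,dy,d\varphi)\,dt.\]
Pulling back the shape theorem's a.s.\ convergence set through the measure-preserving automorphism of $\Omega$ induced by $\Phi^{\epsilon u}$, we obtain $\mathcal{B}^T(\epsilon)/T\to\Lambda(v+\epsilon u)$ almost surely.

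Substituting the measurable optimizer $\gamma^T(v)$ yields the pathwise inequality $\mathcal{B}^T(\epsilon)\le \mathcal{A}^T(\gamma^T(v);\epsilon)$. Taylor-expanding in $\epsilon$ using $C^2$ regularity of $L$ and each $\varphi$ gives
\[\mathcal{A}^T(\gamma^T(v);\epsilon)=A^T_*(v)+\epsilon\int_0^T\!\big\langle u,\,\nabla L(\dot\gamma^T(v)_t)+\Fprime(t,\gamma^T(v)_t)\big\rangle\,dt+R^T(\epsilon),\]
with the first-order coefficient matching $\Fprime$ exactly via the second expression in \eqref{eq:phiDef}. The remainder has an $L$-part, controlled by assumption \eqref{eq:secondDerivAssumption} (which for $\|\epsilon u\|_2\le\delta_0$ gives $\|\nabla^2 L(\dot\gamma^T(v)_t+\theta\epsilon u)\|\le C L(\dot\gamma^T(v)_t)+C'$) together with the a.s.\ estimate $\int_0^T L(\dot\gamma^T(v)_t)\,dt=O(T)$ obtained from the shape theorem and a tubular-neighborhood Poisson bound using \eqref{eq:varphiMoments}; and a potential part of the form $\epsilon^2\sum_i(t-t_i)^2\|\nabla^2\varphi_i\|_{\mathscr{C}}$ integrated in $t$, for which compact support of each $\varphi_i$ in $K$ bounds $(t-t_i)^2$ uniformly on the support and the same tubular-neighborhood argument controls the sum. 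This yields $R^T(\epsilon)/T=O(\epsilon^2)$ almost surely as $T\to\infty$, uniformly in small $\epsilon$.

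Dividing by $T$ and letting $T\to\infty$ gives, almost surely,
\[\Lambda(v+\epsilon u)-\Lambda(v)\le\epsilon\,\limsup_{T\to\infty}Y^T(u)+O(\epsilon^2),\qquad Y^T(u):=\tfrac{1}{T}\!\int_0^T\!\langle u,\nabla L(\dot\gamma^T(v)_t)+\Fprime(t,\gamma^T(v)_t)\rangle\,dt.\]
Running the same argument with $-u$ in place of $u$ and invoking the convexity bound $2\Lambda(v)\le\Lambda(v+\epsilon u)+\Lambda(v-\epsilon u)$ forces $\limsup_T Y^T(u)=\liminf_T Y^T(u)$ almost surely, so $y(u):=\lim_T Y^T(u)$ exists a.s.\ and lies in $[\partial^+_u\Lambda(v),\,-\partial^+_{-u}\Lambda(v)]$. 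Taking $u=e_i$ gives componentwise a.s.\ limits producing $Z\in\R^d$ with $y(u)=\langle u,Z\rangle$, so $y$ is linear in $u$; the convex inequality $-\partial^+_{-u}\Lambda(v)\le\partial^+_u\Lambda(v)$ then forces all three quantities to coincide, yielding simultaneously the differentiability of $\Lambda$ at $v$ and the a.s.\ gradient formula \eqref{eq:shapeFcnDerivative}. The main obstacle is the uniform almost-sure control of $R^T(\epsilon)/T$, which tightly couples \eqref{eq:secondDerivAssumption} and \eqref{eq:varphiMoments} with careful deterministic-type estimates on the random optimal path $\gamma^T(v)$ interacting with time-correlated bumps—the key difference from the white-in-time discrete-time setting of \cite{BakhtinDow_Differentiability}.
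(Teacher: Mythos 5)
Your proposal is essentially the paper's proof: shear the marked Poisson process, substitute the slope-$v$ optimizer into the sheared action, Taylor expand to second order, use \eqref{eq:secondDerivAssumption} together with Poisson-structure bounds along the discretized optimizer to control the remainder, and then exploit convexity of $\Lambda$ to pass from a one-sided approximate linear bound to differentiability and the gradient formula. The paper packages the last step in the multidimensional convex-analysis \Cref{lem:deterministicDifferentiability}; you do it direction by direction, which is equivalent in substance.

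One step, as written, does not go through. Your displayed inequality $\Lambda(v+\epsilon u)-\Lambda(v)\le\epsilon\limsup_T Y^T(u)+O(\epsilon^2)$, combined with the $-u$ version and $2\Lambda(v)\le\Lambda(v+\epsilon u)+\Lambda(v-\epsilon u)$, yields only $0\le\epsilon\big(\limsup_T Y^T(u)-\liminf_T Y^T(u)\big)+O(\epsilon^2)$, which is automatically true and forces nothing. The useful observation is that $\mathcal{B}^T(\epsilon)\le\mathcal{A}^T(\gamma^T(v);\epsilon)$ with $\epsilon>0$ is a \emph{lower} bound on $Y^T(u)$: rearranging and sending $T\to\infty$ along the right subsequence gives $\liminf_T Y^T(u)\ge\big(\Lambda(v+\epsilon u)-\Lambda(v)\big)/\epsilon-O(\epsilon)$, hence $\liminf_T Y^T(u)\ge\partial\Lambda(v;u)$; applying the same with $-u$ gives $\limsup_T Y^T(u)\le-\partial\Lambda(v;-u)$; and the convexity inequality $\partial\Lambda(v;u)\ge-\partial\Lambda(v;-u)$ then sandwiches all four quantities to coincide, yielding both convergence of $Y^T(u)$ and two-sided directional differentiability. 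This sandwich along arbitrary subsequences is precisely what the paper's \Cref{lem:deterministicDifferentiability} carries out; your version should be repaired accordingly. The remainder estimate $R^T(\epsilon)/T=O(\epsilon^2)$ you sketch compresses the paper's \Cref{lem:secondOrderBounds1,lem:secondOrderBounds2} and the lattice-animal machinery of \Cref{sec:concentratioBounds} (where the real technical work lies); your alternative route to $\int_0^T L(\dot\gamma^T_t(v))\,dt=O(T)$ via lower-bounding $\int F$ along the optimizer is a valid substitute for the paper's concavity argument in \Cref{prop:concavityOfShapeFcn}. Finally, the almost-sure exceptional sets in the shape theorem depend on the shear $\epsilon u$, so the argument should be restricted to a countable dense set of perturbations, as the paper does when invoking \Cref{lem:deterministicDifferentiability}.
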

We hope that this formula for $\nabla \Lambda$ may be useful in studying further properties of the effective Lagrangian such as strict convexity, which is conjectured to hold for a broad class of kinetic energies and potentials.

We give a proof of \Cref{thm:mainDiffTheorem} in \Cref{sec:shearedEnvironment,sec:mainTheoremProof}.

The function $A:(0,\infty)\times\R^d\to \R$ 
given by
\[
A(t,x)=A^{0,t}_{0,x},\quad (t,x)\in (0,\infty)\times\R^d,
\]
where the right-hand side is defined in~\eqref{eq:p2pAction},
is a solution of the HJB equation
\begin{equation}\label{eq:HJBDef}
        \partial_t A(t,x) + H(\nabla A(t,x)) = F(t,x),\quad t\in (0,\infty),\ x\in \R^d,
\end{equation}
where the Hamiltonian $H$ is the Legendre--Fenchel transform of $L$:
\[H(p) = \sup_{x\in \R^d}\{\langle p,x\rangle - L(x)\},\quad p\in\R^d.\] 
 The function $A$ satisfies the boundary condition 
\begin{equation}\label{eq:U0Def}
    \lim_{t\searrow 0}A(t,x)=
\begin{cases}
0,& x=0,\\
+\infty,&x\ne 0.
\end{cases}
\end{equation}
It is sometimes called the fundamental solution, see for example the discussion in \cite{Jing-Souganidis-Tran:MR3602941}.

Our main result implies a homogenization theorem with an additional regularity property:
\begin{theorem} \label{th:main-HJB}
For every $t\in(0,\infty)$ and $x\in\R^d$, 
    \begin{equation}\label{eq:pontwiseHomogenization}
        \lim_{\epsilon\searrow 0}\epsilon A(t/\epsilon,x/\epsilon) = t\Lambda(x/t)
    \end{equation}
    $\Prb$-almost surely. The nonrandom function $\overline{U}(t,x) = t\Lambda(x/t)$ is the fundamental viscosity solution of the deterministic HJB equation
    \begin{equation}\label{eq:HJBHomogenized}
        \partial_t \overline U(t,x) + \overline{H}(\nabla \overline{U}(t,x)) = 0,
    \end{equation}
    where $\overline{H}$ is the Legendre--Fenchel transform of $\Lambda$:
\[\overline{H}(p)  = \sup_{\xi\in \R^d}\{\langle \xi,p\rangle - \Lambda(\xi)\},\quad p\in\R^d.\]    
     Moreover, $\overline{H}$ is strictly convex and $\overline{U}(t,x)$ is a \textbf{classical}  solution which is $C^1$ 
    for all $t > 0$, $x\in\R^d$.
\end{theorem}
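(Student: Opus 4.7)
The plan is to deduce \Cref{th:main-HJB} from \Cref{thm:mainDiffTheorem} via convex analysis and the classical Hopf--Lax theory of HJB equations, so very little new probabilistic work is required.

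For the pointwise convergence \eqref{eq:pontwiseHomogenization}, I would set $T=t/\epsilon$ and observe $x/\epsilon=T(x/t)$, so $\epsilon A(t/\epsilon,x/\epsilon)=(t/T)\,A_*^T(x/t)$. Letting $\epsilon\searrow 0$ along $T=t/\epsilon\to\infty$ and applying \Cref{thm:shapeTheorem} immediately yields the limit $t\Lambda(x/t)$ almost surely.

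Next, I would upgrade the pointwise differentiability of $\Lambda$ to $C^1$-regularity. A standard fact in convex analysis states that a real-valued convex function on $\R^d$ that is differentiable at every point is automatically $C^1$, so $\Lambda\in C^1(\R^d)$. Since $\Lambda$ is finite on all of $\R^d$ and everywhere differentiable, it is \emph{essentially smooth} in Rockafellar's sense, and hence its Legendre--Fenchel conjugate $\overline{H}=\Lambda^*$ is \emph{essentially strictly convex}; in particular, $\overline{H}$ is strictly convex on the interior of its effective domain.

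The classical-solution assertion is a chain-rule computation: differentiating $\overline{U}(t,x)=t\Lambda(x/t)$ gives
\[
\partial_t\overline{U}(t,x)=\Lambda(x/t)-\langle x/t,\nabla\Lambda(x/t)\rangle,\qquad \nabla_x \overline{U}(t,x)=\nabla\Lambda(x/t),
\]
both continuous on $(0,\infty)\times\R^d$ because $\Lambda\in C^1$. Substituting into \eqref{eq:HJBHomogenized} and using the Fenchel--Young equality $\overline{H}(\nabla\Lambda(v))+\Lambda(v)=\langle v,\nabla\Lambda(v)\rangle$, which holds with equality since $\nabla\Lambda(v)\in\partial\Lambda(v)$, yields $\partial_t\overline U+\overline H(\nabla_x\overline U)\equiv 0$.

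To identify $\overline U$ as the fundamental viscosity solution, I would recognize $\overline{U}(t,x)=t\Lambda(x/t)$ as the Hopf--Lax formula $\inf_{y\in\R^d}\{t\Lambda((x-y)/t)+u_0(y)\}$ for the initial datum $u_0$ given by \eqref{eq:U0Def}, which is the unique viscosity solution whenever $\Lambda$ is convex and superlinear. The initial condition $\lim_{t\searrow 0}\overline{U}(t,x)=u_0(x)$ is immediate at $x=0$, and at $x\neq 0$ reduces to superlinearity of $\Lambda$. The step I expect to require the most care is precisely this superlinearity: it is not asserted by \Cref{thm:shapeTheorem} and must be extracted from \eqref{eq:superLinearGrowthAssumption} by combining the Jensen bound $\int_0^T L(\dot\gamma_s)\,ds\ge T\,L((y-x)/T)$ with a uniform lower bound on $\int_0^T F(s,\gamma_s)\,ds$ valid over all admissible paths of slope $v$, which is where the Poissonian structure and the exponential moment assumption \eqref{eq:varphiMoments} enter. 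Once superlinearity is in hand, the rest is standard stability and uniqueness for viscosity solutions.
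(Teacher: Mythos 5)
Your proposal follows the paper's route for the convergence \eqref{eq:pontwiseHomogenization}, the $C^1$ regularity of $\Lambda$, the strict convexity of $\overline H$ via essential smoothness/strict convexity duality, and the chain-rule verification of \eqref{eq:HJBHomogenized} with the Fenchel--Young identity; these parts are correct and essentially identical in spirit to the paper's argument.

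The gap is in your treatment of the superlinearity of $\Lambda$, which you correctly identify as the crux of the initial condition at $x\neq 0$. You propose to combine the Jensen inequality $\int_0^T L(\dot\gamma_s)\,ds\ge T\,L((y-x)/T)$ with ``a uniform lower bound on $\int_0^T F(s,\gamma_s)\,ds$ valid over all admissible paths of slope $v$.'' No such uniform bound exists: an admissible path from $0$ to $Tv$ can zigzag arbitrarily far, and both $\sup_{s\in[0,T]}\|\gamma_s\|_2$ and the discretized length $m(\gamma)$ are unbounded over $\Gamma_{0,Tv}^{0,T}$. The relevant bound in the paper, \eqref{eq:empiricalEnvironmentLowerBound} of \Cref{lem:lowerBoundAction}, is $\frac1T\int_0^T F(t,\gamma_t)\,dt\ge -q\,m(\gamma)/T$, which degrades with $m(\gamma)$; and by \Cref{lem:linearGrowthPotential} the potential itself only has a linear-in-$\|x\|$ envelope, so a wandering path can make the $F$-integral as negative as desired. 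The paper circumvents this by applying the lower bound only to the minimizer $\gamma^T(v)$, whose discretized length is shown in \Cref{rem:linearGrowthmGamma} to satisfy $\limsup_T m(\gamma^T(v))/T\le \delta\Lambda(v)+C$ (using the already-established convergence $\frac1T A_*^T(v)\to\Lambda(v)$). This produces the self-referential inequality
\[
t\Lambda(x/t)\ \ge\ tL(x/t)\ -\ t\Lambda(x/t)\ -\ Ct,
\]
which rearranges to $\Lambda(v)\ge\frac12 L(v)-\frac{C}{2}$, and superlinearity of $\Lambda$ follows from \eqref{eq:superLinearGrowthAssumption}. So the idea of reducing to a quantitative lower bound is right, but the bound must be tied to the optimal path (and must feed $\Lambda$ back into itself), not taken uniformly over all admissible paths.
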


Stronger than pointwise convergence statements are possible in Theorem~\ref{th:main-HJB} but we do not pursue that direction in this paper concentrating on our new differentiability result.

We give the full proof of \Cref{th:main-HJB}
in Section~\ref{sec:proof-homog}.

The remainder of the paper is structured as follows. The proof of \Cref{thm:mainDiffTheorem} is presented in \Cref{sec:mainTheoremProof}. 
The main idea is introduced in Section~\ref{sec:main_idea} but first a key tool, the sheared environment, is introduced in~\Cref{sec:shearedEnvironment}. In \Cref{sec:convexityLemma} we give a convex analysis lemma, another key ingredient. In \Cref{sec:mainTheoremProofSub} we complete the proof of \Cref{thm:mainDiffTheorem}. \Cref{sec:concentratioBounds,sec:ShapeTheorem,sec:appendix} establish \Cref{thm:shapeTheorem} and prove some more technical results used in the proof of \Cref{thm:mainDiffTheorem}. Specifically, in \Cref{sec:concentratioBounds} we prove certain bounds on the optimal paths and their actions. In \Cref{sec:ShapeTheorem} we present the proof of \Cref{thm:shapeTheorem}.
In \Cref{sec:proof-homog} we prove Theorem~\ref{th:main-HJB}.
 In \Cref{sec:appendix} we give the proof of some technical lemmas.

\section{Proof of Main Theorem}\label{sec:mainTheoremProof}
\subsection{The Sheared Environment}\label{sec:shearedEnvironment}
Working in the sheared environment is a crucial tool in our proof of \Cref{thm:mainDiffTheorem}. Let us introduce the relevant notions first, and then, in Section~\ref{sec:main_idea}, 
we will discuss the main idea of the proof and how the shear transformations are used. 
 
For every $v\in\R^d$, we denote by $\Xi_v$ the shear transformation of 
$\R\times\R^d$ with shear factor~$v$:
\[
\Xi_v(t,x)=(t,x+tv),\quad (t,x)\in\R\times\R^d.
\]
We extend this to paths: for a path $\gamma:I\to\R^d$ defined on $I\subset\R$, we define
$\Xi_v \gamma$ by 
\begin{equation}\label{eq:shearDef}
    (\Xi_v \gamma)_t=\gamma_t+tv,\quad t\in I.
\end{equation}
For any function $f:\R\times\R^d\to\R$, we define
\[
\Xi_v^* f(t,x)= f(t,x+tv),\quad (t,x)\in \R\times\R^d.
\]
If $f$ is vector valued, then we let $\Xi_v^* f$ denote the coordinatewise application of~$\Xi_v^*,$ i.e. $(\Xi_v^* f)_i = \Xi_v^* f_i.$ The shear operators commute with spatial derivatives:
for all $v\in\R$, and all differentiable functions $f$, we have 
$\Xi_v^*\nabla f\equiv \nabla \Xi_v^* f $. 

For a (signed) measure $\nu$ on $\R\times \R^d\times \mathbb{X}$ (for some space $\mathbb{X}$), we denote by $\Xi_v^*\nu$ the pushforward of $\nu$ under~$\Xi_v$ acting on $\R\times \R^d$:
for $A\subset \R\times\R^d$, $B\subset\mathbb{X}$,
\[
\Xi_v^*\nu(A \times B)=\nu(\Xi_v^{-1}(A)\times B).
\]

For a shear factor $v\in \R^d$, we define the \textit{sheared environment}, $F_v,$   by
\begin{align*}
    F_v(t,x)& = \int_{\R\times \R^d\times \mathscr{C}}\varphi(t-s,x+(t-s)v-y)\mathbf{N}(ds,dy,d\varphi)\\
    & = \int_{\R\times \R^d\times \mathscr{C}}\Xi_v^*\varphi(t-s,x-y)\mathbf{N}(ds,dy,d\varphi).
\end{align*}

\begin{lemma}\label{lem:shearActionF}
    We have
    \begin{equation}\label{eq:shearActionF}
        \Xi_v^* F = \int \Xi_v^* \varphi(t-s,x-y)(\Xi_{-v}^* \mathbf{N})(ds,dy,d\varphi).
    \end{equation}
    Consequently,
    \begin{equation}\label{eq:equalityInDistribution}
        F_v \stackrel{d}{=} \Xi_v^* F.
    \end{equation}
\end{lemma}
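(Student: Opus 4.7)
My plan is to prove the two claims in sequence: first the pointwise identity \eqref{eq:shearActionF}, which is a deterministic change-of-variables computation, and then deduce the distributional equality \eqref{eq:equalityInDistribution} from invariance of the intensity measure of $\mathbf{N}$ under the shear map.

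For the first part, I would start from the definition $\Xi_v^* F(t,x) = F(t,x+tv)$ and expand using the definition of $F$ to get
\[
\Xi_v^* F(t,x) = \int \varphi(t-s,\, x - y + tv)\,\mathbf{N}(ds,dy,d\varphi).
\]
The key algebraic observation is the splitting $tv = sv + (t-s)v$, which lets me rewrite the integrand as $\varphi(t-s,\, x - (y - sv) + (t-s)v) = \Xi_v^*\varphi(t-s,\, x - (y-sv))$. At this point I apply the standard change-of-variables rule for pushforwards: for a function $g$ on $\R \times \R^d \times \mathscr{C}$,
\[
\int g(s,y,\varphi)\,\mathbf{N}(ds,dy,d\varphi) = \int g(\Xi_v(s,y),\varphi)\,(\Xi_{-v}^*\mathbf{N})(ds,dy,d\varphi),
\]
since $\Xi_{-v}^{-1} = \Xi_v$ (this identity follows directly from the definition of $\Xi_v^*\nu$ given in the setup, by first checking it on products of indicators and extending). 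Applied with $g(s,y,\varphi) = \Xi_v^*\varphi(t-s,\, x - (y-sv))$, which satisfies $g(\Xi_v(s,y),\varphi) = \Xi_v^*\varphi(t-s,\, x-y)$, this yields exactly \eqref{eq:shearActionF}.

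For the second part, I would invoke the fact that the shear map $\Xi_{-v}$ is a measure-preserving bijection of $\R \times \R^d$ with respect to Lebesgue measure (its Jacobian determinant is $1$), so the intensity measure $dt \otimes dx \otimes \varphiLaw$ of $\mathbf{N}$ is invariant under the action of $\Xi_{-v}$ on the first two coordinates. Hence $\Xi_{-v}^*\mathbf{N}$ is itself a Poisson point process with the same intensity as $\mathbf{N}$, i.e.\ $\Xi_{-v}^*\mathbf{N} \stackrel{d}{=} \mathbf{N}$ as random measures on $\R \times \R^d \times \mathscr{C}$. Since both $F_v$ and the right-hand side of \eqref{eq:shearActionF} are the \emph{same} deterministic functional applied to $\mathbf{N}$ and $\Xi_{-v}^*\mathbf{N}$ respectively, \eqref{eq:equalityInDistribution} follows as an equality of random fields.

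There is no real obstacle here; the only step requiring any care is bookkeeping of signs in the pushforward identity (ensuring $\Xi_{-v}$, not $\Xi_v$, appears on the measure side when $\Xi_v$ acts on the argument of $F$), and checking that the integrals make sense $\Prb$-almost surely. The latter follows from the compact support of $\varphi$ in $K$ together with the moment assumption \eqref{eq:varphiMoments}, which ensures only finitely many Poisson points contribute to $F(t,x)$ for any fixed $(t,x)$ almost surely.
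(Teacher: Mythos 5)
Your proof is correct and follows essentially the same approach as the paper: the pointwise identity \eqref{eq:shearActionF} comes from the same decomposition $tv = sv + (t-s)v$ and the substitution $y \mapsto y - sv$ (which you phrase as a pushforward identity and the paper phrases as a direct change of variables in the Poisson integral), and the distributional identity \eqref{eq:equalityInDistribution} follows in both cases from shear invariance of the intensity measure, hence of the Poisson process. Your version is slightly more explicit about the pushforward bookkeeping and the measure-preserving Jacobian, but the argument is the same.
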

\begin{proof}
    Applying the change of variables $y=z+sv$ (i.e., the shear transformation~$\Xi_v$), we obtain 
    \begin{align*}
        (\Xi_v^* F)(t,x) &= \int \varphi(t-s,x + tv-y)\mathbf{N}(ds,dy,d\varphi)\\
        & = \int \varphi(t-s,x + (t-s)v + sv - y)\mathbf{N}(ds,dy,d\varphi)\\
        & = \int\varphi(t-s,x + (t-s)v - z)\mathbf{N}(ds,d(z+ sv),d\varphi)\\
        & = \int\Xi_v^*\varphi(t-s,x - z)(\Xi_{-v}^* \mathbf{N})(ds,dz,d\varphi)
    \end{align*}
    proving \eqref{eq:shearActionF}. Equality \eqref{eq:equalityInDistribution} follows from shear invariance of the Poisson process on $\R\times \R^d$.
\end{proof}
For $\gamma \in \Gamma_{0,0}^{0,T}$, we define
\[
B^{T}(v,\gamma) = \int_0^T L(\dot{\gamma}_t + v)dt + \int_0^T F_{v}(t,\gamma_t)dt.
\]
For $v\in\R^d$, let
\begin{equation}\label{eq:BF_def}
    B_*^T(v) = \inf\big\{ B^T(v,\gamma):\,\gamma \in \Gamma_{0,0}^{0,T}\big\}.
\end{equation}

\begin{lemma}\label{lem:measurableSelection}
    For every $x,y\in \R^d$, with $\Prb$-probability one, for every $t,T\in \R$ satisfying $t < T$ there exists a minimizer to \eqref{eq:p2pAction} in $\Gamma_{x,y}^{t,T}$. In addition, there are measurable maps 
    \begin{equation}\label{eq:gammaADef}
        \gamma^T(v):\Omega\to \Gamma^{0,T}_{0,Tv}
    \end{equation}
    and 
    \begin{equation}
        \psi^T(v):\Omega\to \Gamma^{0,T}_{0,0}
    \end{equation}
    providing the infima in \eqref{eq:AF_def} and \eqref{eq:BF_def}, respectively, and such that 
    \begin{align}\label{shearedEnvironmentEquality}
        (B_*^T(v),\Xi_v\psi^T(v))_{T>0} \stackrel{d}{=} (A_*^T(v),\gamma^T(v))_{T>0}.
    \end{align}
\end{lemma}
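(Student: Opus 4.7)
The proof splits into three parts. For existence of minimizers in \eqref{eq:p2pAction}, I would apply the direct method of the calculus of variations. A minimizing sequence $\{\gamma_n\}\subset \Gamma^{t,T}_{x,y}$ has $\int_t^T L(\dot\gamma_n)\,ds$ uniformly bounded, so the superlinear growth assumption \eqref{eq:superLinearGrowthAssumption} together with the de la Vall\'ee Poussin criterion gives uniform integrability of $\{\dot\gamma_n\}$ in $L^1([t,T];\R^d)$; Dunford--Pettis then yields weak $L^1$-precompactness. Together with the fixed endpoints, this gives weak precompactness in $W^{1,1}$ and, via the compact Sobolev embedding in one dimension, uniform convergence of a subsequence to some $\gamma^*\in\Gamma^{t,T}_{x,y}$. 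Weak lower semicontinuity of $\gamma\mapsto\int L(\dot\gamma)\,ds$ (convexity of $L$) together with continuity of $F$ on compact sets shows that $\gamma^*$ attains the infimum. Intersecting the resulting full-measure events over $(t,T)\in\Q^2$ and extending by continuity in the endpoints produces a single $\Prb$-null set off which minimizers exist for all $t<T$ simultaneously.

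For the measurable selections I would appeal to a standard selection theorem such as Kuratowski--Ryll-Nardzewski (or Yankov--von Neumann--Aumann in a universally measurable setting). The explicit Poissonian representation of $F$ makes $A^{0,T}(\gamma;\omega)$ jointly measurable in $(\omega,\gamma)$ on $\Omega\times C^0([0,T];\R^d)$, and the argmin set is nonempty and closed in the ambient Polish space. Crucially, one fixes a single deterministic Borel rule $\Phi$ (for instance, lex-minimization against a pre-chosen countable dense family) sending a realization of the environment to a minimizing path; setting $\gamma^T(v)(\omega)=\Phi(F(\omega))$ and $\psi^T(v)(\omega)=\Phi(F_v(\omega))$ gives both selections through one common functional.

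The distributional equality \eqref{shearedEnvironmentEquality} follows from an algebraic identity combined with Lemma~\ref{lem:shearActionF}. The map $\gamma\mapsto\Xi_v\gamma$ is a bijection $\Gamma^{0,T}_{0,0}\to\Gamma^{0,T}_{0,Tv}$ with $\dot{\Xi_v\gamma}=\dot\gamma+v$, so pathwise
\begin{equation*}
A^{0,T}(\Xi_v\gamma)=\int_0^T L(\dot\gamma_s+v)\,ds+\int_0^T (\Xi_v^*F)(s,\gamma_s)\,ds,
\end{equation*}
which is precisely the $B^T$-functional with $\Xi_v^*F$ in place of $F_v$. Consequently, $\Xi_v\circ\Phi(\Xi_v^*F)$ achieves the infimum in \eqref{eq:AF_def} built from $F$, while $\Phi(F_v)$ achieves the infimum in \eqref{eq:BF_def} built from $F_v$. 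Since Lemma~\ref{lem:shearActionF} gives $\Xi_v^*F\stackrel{d}{=}F_v$ jointly in $(t,x)$, applying the same Borel rule $\Phi$ to the two equidistributed fields transfers the pathwise identity to the claimed identity in law.

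The main obstacle I anticipate is coordinating the measurable selection across all $T>0$ so that \eqref{shearedEnvironmentEquality} really holds as an identity of the full $T$-indexed processes and not just their marginals. This forces $\Phi$ to depend jointly measurably on $T$ as well, which requires a little care when invoking the selection theorem (e.g.\ a parameter-dependent version, or separate selections on a countable dense set of times extended by a continuity-in-$T$ argument applied to the optimal paths). Once that is arranged, the distributional identity of the processes reduces to the distributional identity of the fields.
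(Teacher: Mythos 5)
Your overall architecture (direct method, Kuratowski--Ryll-Nardzewski selection, shear invariance of the Poisson process) matches the paper's, but there are three places where the argument needs tightening, and one of them is a genuine gap.

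The gap is in the coercivity step. You assert that a minimizing sequence $\{\gamma_n\}$ has $\int_t^T L(\dot\gamma_n)\,ds$ uniformly bounded, but this is not automatic: the potential $F$ is not assumed bounded below, so a priori $\int F(s,\gamma_n(s))\,ds$ could tend to $-\infty$ faster than $\int L(\dot\gamma_n)\,ds$ tends to $+\infty$, in which case even the finiteness of $A^{t,T}_{x,y}$ is unclear. The paper handles this by first proving (\Cref{lem:linearGrowthPotential}) that $\Prb$-almost surely $F$ has at most linear growth in the spatial variable on every finite time strip, i.e.\ $\omega\in\Omega_0$ defined in \eqref{eq:linearGrowthEvent}. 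Combined with the superlinear growth \eqref{eq:superLinearGrowthAssumption} of $L$, this gives coercivity of the functional and justifies the uniform bound you need; the single event $\Omega_0$ also handles all $t<T$ at once, which is cleaner than your proposal of intersecting over $(t,T)\in\Q^2$ and extending by continuity. Once this is in place, the rest of your de la Vall\'ee Poussin/Dunford--Pettis argument is correct in spirit and aligns with the paper's use of Cannarsa--Sinestrari (\Cref{lem:weakContinuityofJ}).

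On the measurable selection, invoking KRN is the right move, and the idea of a deterministic Borel rule $\Phi$ on the environment is correct, but verifying weak measurability of the argmin correspondence is nontrivial; the paper does this through an explicit construction with a countable dense family of potentials $\mathcal{G}$ and a countable basis $\mathcal{B}$ of convex sets, yielding the representation \eqref{eq:UpsiMeasurable}. Your ``lex-minimization against a pre-chosen countable dense family'' points in the same direction but is not a substitute for that verification.

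Finally, on coordinating the selections across all $T>0$: you correctly identify this as a potential issue, but your proposed fix (parameter-dependent selection or a density-in-$T$ argument) is not how the paper resolves it, and is harder to carry out. The paper's resolution is cleaner and worth internalizing: it defines $\psi^T(v)$ from $\gamma^T(v)$ by the explicit formula $\psi^T(v)(\mathbf{N}) = \Xi_{-v}\,\gamma^T(v)(\Xi_{-v}^*\mathbf{N})$, so that the pair $(B_*^T(v),\Xi_v\psi^T(v))_{T>0}$ is the deterministic image under $(\gamma^T(v))_{T>0}$ of $\Xi_{-v}^*\mathbf{N}$, and \eqref{shearedEnvironmentEquality} then follows immediately from the process-level identity $\Xi_{-v}^*\mathbf{N}\stackrel{d}{=}\mathbf{N}$. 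This removes any need to coordinate selections across $T$, since the joint law over $T$ is inherited automatically from the shear invariance of the Poisson process.
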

This lemma justifies the interpretation of $B_*^T(v)$ and $\psi^T(v)$ as the sheared versions of $A_*^T(v)$ and $\gamma^T(v).$ Its proof is mainly technical and so we postpone it to \Cref{sec:appendix}, but \eqref{shearedEnvironmentEquality} is intuitive from Lemma~\ref{lem:shearActionF}. 

An important consequence of this lemma and the Limit Shape Theorem~\ref{thm:shapeTheorem} is that 
for the analysis of the shape function it suffices to consider paths only in $\Gamma^{0,T}_{0,0}$ for a fixed realization of $\mathbf{N}(\cdot,\cdot,\cdot)$, since
$\Prb$-almost surely,
\begin{equation}
\Lambda(v)=\lim_{T\to\infty}\frac{1}{T}B^T_*(v).
\end{equation}
This identity and its generalizations will be used in the proof of the differentiability result in Section~\ref{sec:mainTheoremProofSub}.

\subsection{The main idea of the proof of \Cref{thm:mainDiffTheorem}}
\label{sec:main_idea}
 To prove differentiability of~$\Lambda$ at a point $v$, we want to 
compare $\frac{1}{T}B^T_*(v)$ and $\frac{1}{T}B^T_*(w)$ for $w$ close to $v$, and then sequentially take the limits $T\to \infty$ and $w\to v.$ 

We can use  $\psi^T(v)$ to obtain an upper bound on $B^T_*(w)$:
\[B_*^T(w) \le B^T(w,\psi^T(v)).\]
Using the second order Taylor approximation, we can estimate the right-hand side, $B^T(w,\psi^T(v))$, via  $B^T(v,\psi^T(v)$ and derivatives of $B^T(w,\psi^T(v))$ with respect to $w$.
\begin{equation}
\frac{1}{T}B_*^T(w) \le\frac{1}{T} B^T(v,\psi^T(v)) + \Big\langle w-v, \frac{1}{T}G_T(\psi^T(v)) \Big\rangle + O(\|w-v\|_2^2)
\label{eq:linear-domination}
\end{equation}
where 
\[
G_T(\psi^T(v))=\nabla_w B^T(w,\psi^T(v)) \big|_{w=v}
\] 
allows for an explicit formula, see \eqref{eq:approxLinearDominationB}, and the error term $O(\|w-v\|_2^2)$ is uniform in $T$. The estimate~\eqref{eq:linear-domination} can be used (see \Cref{lem:deterministicDifferentiability}) to find 
$\xi=\lim_{T\to\infty} G_T(\psi^T(v))$ and to prove that the 
subdifferential of the convex function~$\Lambda$ at $v$ is given by a one-point set, $\{\xi\}$, implying that $\xi=\nabla \Lambda(v)$.

We first present the crucial convex analysis lemma in \Cref{sec:convexityLemma}. In \Cref{sec:mainTheoremProofSub} we first present lemmas that allow us to bound the second order term in \eqref{eq:linear-domination}, and then we rigorously implement the preceding argument.

\subsection{Approximate Local Linear Domination}\label{sec:convexityLemma}
First we review some facts concerning convex and concave functions $f:\R^d\to \R$, which can be found, e.g., in \cite{lars-erik_2019}, and we introduce some notation. For a function $f$, we define the subdifferential to be 
\[\partial^\vee f(x) = \{\xi\in \R^d\,:\,\forall y\in \R^d,\,f(y)- f(x)\ge \langle \xi,y-x\rangle\}\]
and the superdifferential to be
\[\partial^\wedge f(x) = \{\xi\in \R^d\,:\,\forall y\in \R^d,\,f(y)- f(x)\le \langle \xi,y-x\rangle\}.\]
If $f$ is convex, then $\partial^\vee f(x)$ is non-empty and convex for all $x\in \R^d.$ A similar statement holds for concave $f$ and $\partial^\wedge f(x).$ A convex function is differentiable at $x\in \R^d$ if and only if $\partial^\vee f(x)$ contains exactly one element.

For $r > 0,$ $x\in \R^d$, we let 
\[B_r(x) = \{y\in \R^d\,:\,\|x-y\|_2<r\}\]
be the ball of radius $r$ around $x$. For $\xi \in \R^d,$ let 
\begin{equation}
    \partial f(x;\xi) = \lim_{r\searrow 0}\frac{f(x + r \xi) - f(x)}{r}
\end{equation}
(assuming the limit exists) be the directional derivative of a function $f$ in direction~$\xi.$ These directional derivatives exist for all $\xi \in \R^d$ and $x$ in the interior of the domain of $f$ if $f$ is either concave or convex.

The following lemma is an arbitrary dimension generalization of the approximate local linear domination result for the one-dimensional case that is Lemma 4.1 in~\cite{BakhtinDow_Differentiability}.

\begin{lemma}\label{lem:deterministicDifferentiability}
    Let $\mathcal{O}\subset \R^d$ be an open set, $x_0\in \mathcal{O}$, and $\mathcal{D}\subset \mathcal{O}$ be dense in $\mathcal{O}.$ Let $(f_n)_{n\in \N}$ be a sequence of functions from $\mathcal{O}$ to $\R$ and $f:\mathcal{O}\to \R$ be a function such that for all $x\in \mathcal{D}\cup\{x_0\}$,
    \[\lim_{n\to \infty}f_n(x) = f(x).\]
    Suppose also that there exists a sequence of vectors $(\xi_n)_{n\in \N}$ in $\R^d$ and a function $h:\mathcal{O}\to \R$ such that the following holds:
    \begin{enumerate}
        \item\label{linearDomination1} There is $\delta > 0$ such that for all $x\in \mathcal{D}\cap B_\delta(x_0)$ and $n\in \N,$ 
        \begin{equation}\label{finiteDerivativeInequality}
            f_n(x) - f_n(x_0) \le \langle \xi_n, x-x_0 \rangle + h(x),
        \end{equation}
        \item\label{linearDomination2} $\lim_{x\to x_0}\frac{h(x)}{\|x-x_0\|_2 } = 0$.
    \end{enumerate}
    If $f$ is convex, then: $f$ is differentiable at $x_0$, the sequence $(\xi_n)_{n\in \N}$ converges, and 
    \begin{equation}\label{convexImpliesDifferentiable}
        \nabla f(x_0) = \lim_{n\to \infty}\xi_n.
    \end{equation}
    If $f$ is concave, then the set of limit points of $(\xi_n)_{n\in \N}$ is contained in $\partial^\wedge f(x_0).$
\end{lemma}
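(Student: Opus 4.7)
The plan is to extract subsequential limits of $(\xi_n)$, pass to the limit in \eqref{finiteDerivativeInequality} to obtain an approximate upper bound for $f - f(x_0)$ on $\mathcal{D}$ near $x_0$, and then use convexity or concavity to pin down these limit points. First I would show that $(\xi_n)$ is bounded: if $\|\xi_{n_k}\|_2 \to \infty$ along a subsequence, compactness lets me extract a further subsequence with $u_k := \xi_{n_k}/\|\xi_{n_k}\|_2 \to u$ for some unit vector $u \in \R^d$. Dividing \eqref{finiteDerivativeInequality} by $\|\xi_{n_k}\|_2$ and letting $k\to\infty$, the left-hand side tends to $0$ thanks to the pointwise convergence $f_{n_k}\to f$ on $\mathcal{D}\cup\{x_0\}$, so $\langle u,x-x_0\rangle \ge 0$ for every $x\in\mathcal{D}\cap B_\delta(x_0)$. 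Density of $\mathcal{D}$ in $\mathcal{O}$ lets me pick $x\in\mathcal{D}$ close to $x_0-\epsilon u$ for small $\epsilon>0$, contradicting this sign. Hence $(\xi_n)$ is bounded, and for any convergent subsequence $\xi_{n_k}\to\eta$ the hypothesis \eqref{finiteDerivativeInequality} passes to the limit to give
\[
f(x) - f(x_0) \le \langle \eta, x - x_0 \rangle + h(x), \qquad x \in \mathcal{D}\cap B_\delta(x_0),
\]
which I will call the limiting inequality.

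For convex $f$, pick any $\xi \in \partial^\vee f(x_0)$, which is nonempty by convexity. Combining the subgradient inequality $\langle\xi,x-x_0\rangle \le f(x)-f(x_0)$ with the limiting inequality yields $\langle\xi-\eta,x-x_0\rangle \le h(x)$ on $\mathcal{D}\cap B_\delta(x_0)$. If $\xi\neq\eta$, density of $\mathcal{D}$ in the nonempty open cone $\{y\in B_\delta(x_0): \langle \xi-\eta, y-x_0\rangle > \tfrac{1}{2}\|\xi-\eta\|_2\|y-x_0\|_2\}$ provides $x\in\mathcal{D}$ arbitrarily close to $x_0$ with $\langle\xi-\eta,x-x_0\rangle \ge \tfrac{1}{2}\|\xi-\eta\|_2\|x-x_0\|_2$; dividing by $\|x-x_0\|_2$ and using $h(x)=o(\|x-x_0\|_2)$ forces $\|\xi-\eta\|_2=0$, a contradiction. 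Hence $\partial^\vee f(x_0)=\{\eta\}$, so $f$ is differentiable at $x_0$ with $\nabla f(x_0)=\eta$; since every subsequential limit of the bounded sequence $(\xi_n)$ equals this same $\eta$, the whole sequence converges, yielding \eqref{convexImpliesDifferentiable}.

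For concave $f$, I would show directly that any subsequential limit $\eta$ of $(\xi_n)$ lies in $\partial^\wedge f(x_0)$ by reducing to directional derivatives. For a unit vector $u\in\R^d$, density of $\mathcal{D}$ in a shrinking cone about the ray from $x_0$ in direction $u$ produces $x_k\in\mathcal{D}\cap B_\delta(x_0)$ with $x_k\to x_0$ and $(x_k-x_0)/\|x_k-x_0\|_2\to u$. Since concave $f$ is locally Lipschitz on $\mathcal{O}$, I can replace $x_k$ by $x_0+\|x_k-x_0\|_2\,u$ inside the limiting inequality at the cost of an $o(\|x_k-x_0\|_2)$ error. Dividing by $\|x_k-x_0\|_2$ and sending $k\to\infty$ gives $\partial f(x_0;u)\le \langle\eta,u\rangle$. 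Concavity makes the difference quotient $r\mapsto (f(x_0+ru)-f(x_0))/r$ nonincreasing in $r>0$, so $f(x_0+u)-f(x_0)\le \partial f(x_0;u)\le \langle\eta,u\rangle$; writing $u=y-x_0$ recovers the supergradient inequality defining $\partial^\wedge f(x_0)$.

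The main obstacle will be the density arguments bridging the gap between \eqref{finiteDerivativeInequality} holding only on $\mathcal{D}$ and the need to probe $f$ along prescribed directions at $x_0$. These are resolved by the elementary observation that density of $\mathcal{D}$ in the open set $\mathcal{O}$ forces density in every nontrivial open cone based at the interior point $x_0$, but some care is needed to control the direction $(x-x_0)/\|x-x_0\|_2$ and the magnitude $\|x-x_0\|_2$ simultaneously when extracting the relevant approximating sequence.
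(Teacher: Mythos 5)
Your proof is correct and follows essentially the same strategy as the paper's: extract a subsequential limit $\xi^*$ of $(\xi_n)$, pass to the limit in \eqref{finiteDerivativeInequality} to get a one-sided bound on $f-f(x_0)$ near $x_0$, and then compare with a subgradient (convex case) or directional derivatives (concave case) using density of $\mathcal{D}$ to probe arbitrary directions. One genuine improvement: you explicitly establish boundedness of $(\xi_n)$ via the normalization argument before extracting limits, whereas the paper formally allows $\infty$ as a limit point in $\mathcal{G}\subset\R^d\cup\{\infty\}$ and then passes to the limit $\langle\xi_{n_k},x-x_0\rangle\to\langle\xi^*,x-x_0\rangle$ without addressing the possibility that $\|\xi_{n_k}\|\to\infty$; your step closes that small gap cleanly. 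In the concave case you also give a self-contained monotonicity-of-difference-quotients argument for $\partial f(x_0;u)\le\langle\eta,u\rangle\Rightarrow\eta\in\partial^\wedge f(x_0)$, where the paper invokes a cited lemma; both are fine.
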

\begin{proof}
    Let $\mathcal{G}\subset \R^d\cup \{\infty\}$ denote the set of limit points of the sequence $(\xi_n)_{n\in \N}$ and let $\xi^*\in\mathcal{G}$ be a limit point of some subsequence $(\xi_{n_k})_{k\in \N}$. Taking $k\to \infty$ in the inequality 
    \[f_{n_k}(x) - f_{n_k}(x_0) \le \langle \xi_{n_k}, x-x_0\rangle + h(x),\]
     we obtain 
    \begin{equation}\label{fUpperDifferential}
        f(x) - f(x_0) \le \langle \xi^*, x-x_0\rangle + h(x)
    \end{equation}
    for all $x\in \mathcal{D}\cap B_\delta(x_0).$
    
    Suppose first that $f$ is convex and let $p\in \partial^\vee f(x_0).$ Then, the inequality
    \[f(x) - f(x_0) \ge \langle p, x-x_0\rangle\]
    along with \eqref{fUpperDifferential} implies 
    \begin{equation}\label{gStarInequality}
        0 \le \langle \xi^* - p, x-x_0\rangle + h(x)
    \end{equation}
    for all $x\in \mathcal{D}\cap B_\delta(x_0).$ Let $\zeta \in \R^d$. Since $\mathcal{D}$ is dense in $\mathcal{O},$ there is a sequence $(x_m(\zeta))_{m\in \N}$ converging to $x_0$ such that 
    \[\lim_{m\to \infty}\frac{x_m(\zeta)-x_0}{\|x_m(\zeta)-x_0\|_2} = \zeta.\]
    Inequality \eqref{gStarInequality} then implies 
    \[0 \le \langle \xi^* - p, \zeta\rangle.\]
    Repeating this procedure with $-\zeta$ gives us 
    \[0 = \langle \xi^*-p, \zeta\rangle.\]
    Since the above holds for all $\zeta\in\R^d$, we have $\xi^* = p.$ But $\xi^*\in \mathcal{G}$ and $p\in \partial^\vee f(x_0)$ were arbitrary, and so, in fact, $\lim_{n\to \infty}\xi_n$ is well-defined and
    \[\mathcal{G} = \partial^\vee f(x_0) = \big\{\lim_{n\to \infty}\xi_n\big\},\]
    and hence $f$ is differentiable at $x_0$ with derivative equal to $\lim_{n\to \infty} \xi_n.$

    Now suppose $f$ is concave. Let $\zeta \in \R^d$ and let $(x_m(\zeta))_{m\in \N}$ denote the same sequence as used previously. Dividing \eqref{fUpperDifferential} by $\|x_m(\zeta) - x_0\|_2$ and taking $m\to \infty$, we obtain 
    \begin{equation}\notag
        \lim_{m\to \infty}\frac{f(x_m(\zeta)) - f(x_0)}{\|x_m(\zeta) - x_0\|_2} \le \langle \xi^*,\zeta\rangle.
    \end{equation}
    The limit on the left-hand side of the above exists by concavity of $f$ and equals $\partial f(x_0;\zeta).$ Indeed, if $L>0$  and $R>0$ are  such that 
    \[|f(x) - f(x_0)| \le L \|x-x_0\|_2\]
    for $\|x-x_0\|<R$ and if $r_m = \|x_m(\zeta) - x_0\|_2$, then
    \begin{align*}
        \lim_{m\to \infty}\frac{f(x_m(\zeta)) - f(x_0)}{\|x_m(\zeta) - x_0\|_2} &= L\lim_{m\to \infty}\frac{\|x_m(\zeta) - x_0 - r_m \zeta\|_2}{r_m} + \lim_{m\to \infty}\frac{f(x_0 + r_m\zeta)-f(x_0)}{r_m}\\
        & = \partial f(x_0;\zeta).
    \end{align*}
    
    So, we obtain
    \[\partial f(x_0;\zeta) \le \langle \xi^*, \zeta\rangle.\]
    Since the above holds for all $\zeta \in \R^d$, we can conclude by (the concave version of) Lemma 3.6.3 in \cite{lars-erik_2019} that $\xi^*\in \partial^\wedge f(x_0).$ 
    Hence, $\mathcal{G}\subset \partial^\wedge f(x_0)$ and the claim for concave $f$ is proved. 
\end{proof}

\subsection{Main Proof}\label{sec:mainTheoremProofSub}

It will be helpful for perturbative analysis to introduce additional parameters in \eqref{eq:BF_def}. For $\alpha,\beta > 0$, define 
\begin{equation*}
    B^T(v,\gamma,\alpha,\beta) = \alpha \int_0^T L(\dot\gamma_t+v)dt + \beta\int_0^T F_v (t,\gamma_t)dt
\end{equation*}
and 
\begin{equation}\label{eq:BFab_def}
    B^T_*(v,\alpha,\beta) = \inf\{B^T(v,\gamma,\alpha,\beta)\,:\,\gamma\in\Gamma^{0,T}_{0,0}\}.
\end{equation}
Since our requirements on the function $L$ and the field $F$ are invariant under linear transformations of $L$ and $F$, \Cref{thm:shapeTheorem} holds not just for $B^T_*(v)=B^T_*(v,1,1)$ but for  
$B^T_*(v,\alpha,\beta)$ with
arbitrary $\alpha,\beta>0.$ We obtain the shape function $\Lambda$ depending on $(v,\alpha,\beta)$ such that for all $v\in \R^d$ and all $\alpha,\beta > 0$,
\begin{equation}\label{eq:def-of-Lambda-alpha-beta}
    \Lambda(v,\alpha,\beta) = \lim_{T\to \infty}\frac{1}{T}B_*^T(v,\alpha,\beta)
\end{equation}
with $\Prb$-probability one. We let $\psi^T(v,\alpha,\beta)$ be the measurable selection of minimizer to \eqref{eq:BFab_def} as given in \Cref{lem:measurableSelection}.

We will use some additional notation. For a path $\gamma\in W^{1,1}([0,T];\R^d)$ define 
\begin{equation}
    \overline{L}_T(v,\gamma) = \frac{1}{T}\int_0^T L(\dot\gamma_t + v)dt
\end{equation}
and 
\begin{equation}
    \overline{F}_T(v,\gamma) = \frac{1}{T}\int_0^T F_v(t,\gamma_t)dt.
\end{equation}

\begin{proposition}\label{prop:concavityOfShapeFcn} (i)
    The function $\Lambda$ is concave in $(\alpha,\beta)$. (ii) For every $\alpha,\beta>0,$ $\Prb$-almost surely, the limit points (as $T\to\infty$) of the
    functions
    \[\Big(\overline{L}_T(v,\psi^T(v,\alpha,\beta))\Big)_{T>0}\textrm{ and }\Big(\overline{F}_T(v,\psi^T(v,\alpha,\beta))\Big)_{T>0}\]
    are contained in $\partial_\alpha^\wedge \Lambda(v,\alpha,\beta)$ and $\partial_\beta^\wedge \Lambda(v,\alpha,\beta),$ respectively.
\end{proposition}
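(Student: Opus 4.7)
The key observation driving both parts is that the quantity $B^T(v,\gamma,\alpha,\beta)$ is \emph{linear} in $(\alpha,\beta)$ for every fixed $\gamma$, so $B^T_*(v,\alpha,\beta)$ is the infimum of a family of affine functions and hence concave in $(\alpha,\beta)$ for every $T$ and every realization of $\mathbf{N}$. This structural fact plus Lemma~\ref{lem:deterministicDifferentiability} will deliver both claims.

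\medskip

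\textbf{Part (i).} I would fix $v$ and take any three points $(\alpha_i,\beta_i)\in(0,\infty)^2$, $i=1,2,3$, with $(\alpha_3,\beta_3)=\lambda(\alpha_1,\beta_1)+(1-\lambda)(\alpha_2,\beta_2)$ for some $\lambda\in[0,1]$. The deterministic concavity of $B_*^T(v,\cdot,\cdot)$ gives
\[
\tfrac{1}{T}B^T_*(v,\alpha_3,\beta_3)\ge \lambda\,\tfrac{1}{T}B^T_*(v,\alpha_1,\beta_1)+(1-\lambda)\,\tfrac{1}{T}B^T_*(v,\alpha_2,\beta_2)
\]
for all $T>0$. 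On the (full-probability) event where the three limits in \eqref{eq:def-of-Lambda-alpha-beta} simultaneously exist at these three points, sending $T\to\infty$ yields concavity of $\Lambda(v,\cdot,\cdot)$ at this triple. Since $\Lambda$ is deterministic, the inequality holds unconditionally, and running over all triples proves (i).

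\medskip

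\textbf{Part (ii).} Write $\psi = \psi^T(v,\alpha,\beta)$ for brevity. The optimality of $\psi$ and the definition of $B^T_*$ give, for any $\alpha'>0$,
\[
B^T_*(v,\alpha',\beta)-B^T_*(v,\alpha,\beta)\le B^T(v,\psi,\alpha',\beta)-B^T(v,\psi,\alpha,\beta)=(\alpha'-\alpha)\int_0^T L(\dot\psi_t+v)\,dt.
\]
Dividing by $T$ yields
\[
\tfrac{1}{T}B^T_*(v,\alpha',\beta)-\tfrac{1}{T}B^T_*(v,\alpha,\beta)\le (\alpha'-\alpha)\,\overline{L}_T(v,\psi^T(v,\alpha,\beta)),
\]
which is precisely the hypothesis \eqref{finiteDerivativeInequality} of Lemma~\ref{lem:deterministicDifferentiability} with $h\equiv 0$, $\mathcal{O}=(0,\infty)$, $x_0=\alpha$, $f_T(\alpha')=\tfrac{1}{T}B^T_*(v,\alpha',\beta)$, $f(\alpha')=\Lambda(v,\alpha',\beta)$, and $\xi_T=\overline{L}_T(v,\psi^T(v,\alpha,\beta))$. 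Choosing a countable dense set $\mathcal{D}\subset(0,\infty)$, the shape theorem applied at each $\alpha'\in\mathcal{D}\cup\{\alpha\}$ produces a single full-probability event on which $f_T\to f$ pointwise on $\mathcal{D}\cup\{\alpha\}$. Since $f$ is concave by part (i), the concave case of Lemma~\ref{lem:deterministicDifferentiability} gives that every limit point of $\overline{L}_T(v,\psi^T(v,\alpha,\beta))$ lies in $\partial_\alpha^\wedge \Lambda(v,\alpha,\beta)$. The statement for $\overline{F}_T(v,\psi^T(v,\alpha,\beta))$ and $\partial_\beta^\wedge \Lambda(v,\alpha,\beta)$ is obtained by the identical argument, varying $\beta$ while holding $\alpha$ fixed.

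\medskip

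The argument is essentially routine once one recognizes the linear-in-$(\alpha,\beta)$ structure; there is no serious obstacle. The only points requiring a little care are (a) choosing a single countable dense set to make the shape-theorem limits simultaneously valid almost surely, and (b) making sure one uses the concave branch of Lemma~\ref{lem:deterministicDifferentiability}, so that limit points (rather than a unique limit) are what is asserted.
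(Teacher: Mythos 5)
Your proof is correct and follows essentially the same route as the paper: establish concavity of $B_*^T(v,\cdot,\cdot)$ from the infimum-of-affine-functions structure and pass to the limit for (i), then use the optimality of $\psi^T(v,\alpha,\beta)$ to get a one-sided affine domination and apply the concave branch of Lemma~\ref{lem:deterministicDifferentiability} with $h\equiv 0$ for (ii). The paper writes out the concavity inequality explicitly rather than invoking the infimum-of-affine-functions slogan, but that is a cosmetic difference.
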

\begin{proof} This is a straightforward extension of the proof of Proposition 4.1 in~\cite{BakhtinDow_Differentiability}.

    Let $\alpha_1,\alpha_2,\beta_1,\beta_2> 0$ and $\mu\in [0,1]$. Let $\alpha = \mu\alpha_1 + (1-\mu)\alpha_2$ and $\beta = \mu\beta_1 + (1-\mu)\beta_2.$ Then,
    \begin{align*}
        B_*^T(v,\alpha,\beta) &= \min\Big\{  \alpha \int_0^T L(\dot\gamma_t+v)dt + \beta  \int_0^T F_v(t,\gamma_t)dt\,:\gamma\in\Gamma^{0,T}_{0,0}\Big\}\\
        & \ge \mu \min\Big\{  \alpha_1 \int_0^T L(\dot\gamma_t+v)dt + \beta_1  \int_0^T F_v(t,\gamma_t)dt\,:\gamma\in\Gamma^{0,T}_{0,0}\Big\} \\
        & \qquad\qquad + (1-\mu) \min\Big\{  \alpha_2 \int_0^T L(\dot\gamma_t+v)dt + \beta_2  \int_0^T F_v(t,\gamma_t)dt\,:\gamma\in\Gamma^{0,T}_{0,0}\Big\}\\
        & = \mu B_*^T(v,\alpha_1,\beta_1) + (1-\mu)B_*^T(v,\alpha_2,\beta_2),
    \end{align*}
    which proves concavity of 
    $B_*^T(v,\alpha,\beta)$ in $\alpha$ and $\beta$.   Now, since $\Lambda(v,\alpha,\beta)$ defined in~\eqref{eq:def-of-Lambda-alpha-beta} is
    a limit of concave functions, part (i) follows.   
    
     Let us now prove part (ii). For $\beta,\alpha,\alpha'> 0$,
    we can use the minimizer  $\psi^T(v,\alpha,\beta)$ realizing $B_*^T(v,\alpha,\beta)$ to estimate  
    $B_*^T(v,\alpha',\beta)$:
    \begin{align*}
        B_*^T(v,\alpha',\beta) &\le  \alpha'\int_0^T L(\dot \psi^T_t(v,\alpha,\beta)+v) + \beta\int_0^T F_v(t,\psi^T_t(v,\alpha,\beta))dt\\
        & = B_*^T(v,\alpha,\beta) + (\alpha'-\alpha)\int_0^T L(\dot \psi^T_t(v,\alpha,\beta)+v)\\
        & = B_*^T(v,\alpha,\beta) + 
        (\alpha'-\alpha) T\overline{L}_T(v,\psi^T(v,\alpha,\beta)).
    \end{align*}
    This estimate allows us to apply \Cref{lem:deterministicDifferentiability}. We can choose $\mathcal{D}$ to be any countable dense subset of $\R^d.$ Fix some $\alpha,\beta>0.$ By~\eqref{eq:def-of-Lambda-alpha-beta}, with $\Prb$-probability one, for all $x\in \mathcal{D}\cup\{\alpha\},$ and all sequences $T_n\nearrow+\infty$,
    \[\lim_{n\to \infty}\frac{1}{T_n}B_*^{T_n}(v,x,\beta) = \Lambda(v,x,\beta).\]
    Thus, with $\Prb$-probability one, the conditions of \Cref{lem:deterministicDifferentiability} are satisfied with 
    \begin{align*}
        x_0 &= \alpha,\\
        f_n(x) &= \frac{1}{T_n}B_*^{T_n}(v,x,\beta),\\
        f(x) &= \Lambda(v,x,\beta),\\
        g_n &= \overline{L}_{T_n}(v,\psi^{T_n}(v,\alpha,\beta)),\\
        h(x) &= 0,
    \end{align*}
    and our claim for~$\overline{L}_T$ follows.
    The claim for~$\overline{F}_T$  follows similarly.
\end{proof}

Define 
\[M_{v,\infty}=\limsup_{T\to \infty}\frac{1}{T}\int_0^T\sup_{\|r\|_2\le \delta_0}\|(\Xi_{v+r}^*\nabla^2 L)(\dot{\psi}^T(v))\|dt,\]
where $\delta_0$ is as in \eqref{eq:secondDerivAssumption}. Also, define
\[N_{v,\infty}=\limsup_{T\to \infty}\frac{1}{T}\int_0^T \int_{\R\times \R^d}(t-s)^2 \sup_{\|r\|_2\le 1}\|(\Xi_{v+r}^*\nabla^2  \varphi)(t-s,\psi^T(v)-y)\| \mathbf{N}(ds,dy,d\varphi)dt.\]
The quantities $M_{v,\infty}$ and $N_{v,\infty}$ will be error terms in a second order Taylor expansion. 
\begin{lemma}\label{lem:secondOrderBounds1}
    For every $v\in \R^d,$ we have
    \begin{equation}\label{MvFinite}
        M_{v,\infty} < \infty
    \end{equation}
    with $\Prb$-probability one.
\end{lemma}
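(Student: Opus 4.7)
The plan is to bound the integrand $\sup_{\|r\|_2\le\delta_0}\|(\Xi_{v+r}^*\nabla^2L)(\dot\psi^T_t(v))\|$ pointwise in $t$ by a quantity of the form $C_1 L(\dot\psi^T_t(v)+v)+C_2$, and then invoke Proposition~\ref{prop:concavityOfShapeFcn}(ii) to conclude that the time-averaged kinetic energy along the optimal sheared path has a finite $\limsup$ almost surely. Note that the starred notation here should be read as $(\Xi_{v+r}^*\nabla^2L)(\dot\psi^T_t)=\nabla^2L(\dot\psi^T_t+v+r)$, consistent with the way shears enter the kinetic energy term in $B^T(v,\gamma)$.

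First, I would extract from the hypothesis \eqref{eq:secondDerivAssumption} a global bound: there exist constants $C_1,C_2>0$ such that for all $u\in\R^d$,
\[
\sup_{\|r\|_2\le\delta_0}\|\nabla^2L(u+r)\|\le C_1 L(u)+C_2.
\]
For $\|u\|_2$ larger than some threshold, this is exactly \eqref{eq:secondDerivAssumption}. For $\|u\|_2$ bounded, $\nabla^2L$ is continuous (hence locally bounded) because $L\in C^2$, so we obtain a uniform bound on the relevant compact set that can be absorbed into $C_2$ (possibly enlarging $C_2$ using $\inf_u L(u)>-\infty$, which holds since $L$ is convex with superlinear growth).

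Second, applying the pointwise bound with $u=\dot\psi^T_t(v)+v$ and averaging in $t$ yields
\[
\frac{1}{T}\int_0^T\sup_{\|r\|_2\le\delta_0}\|\nabla^2L(\dot\psi^T_t(v)+v+r)\|\,dt\le C_1\overline{L}_T(v,\psi^T(v))+C_2.
\]
So it suffices to show $\limsup_{T\to\infty}\overline{L}_T(v,\psi^T(v))<\infty$ almost surely.

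Third, by Proposition~\ref{prop:concavityOfShapeFcn}(ii) applied at $(\alpha,\beta)=(1,1)$, almost surely every limit point of $\overline{L}_T(v,\psi^T(v))$ lies in $\partial^\wedge_\alpha\Lambda(v,1,1)$. By part (i) of that proposition, $\Lambda(v,\cdot,1)$ is concave on $(0,\infty)$, and since $\alpha=1$ is an interior point, its superdifferential is a bounded interval of $\R$. Consequently $\limsup_{T\to\infty}\overline{L}_T(v,\psi^T(v))<\infty$ almost surely, and \eqref{MvFinite} follows. The only nontrivial step is the first one, where care is needed to patch the asymptotic estimate \eqref{eq:secondDerivAssumption} with the behavior of $\nabla^2L$ on compact sets; the rest is a direct application of the earlier concavity/superdifferential machinery.
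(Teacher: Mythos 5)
Your proof is correct and follows essentially the same route as the paper: establish a pointwise bound $\sup_{\|r\|_2\le\delta_0}\|\nabla^2 L(u+r)\|\le C_1 L(u)+C_2$ by combining assumption \eqref{eq:secondDerivAssumption} far away with continuity of $\nabla^2 L$ on a compact set and lower-boundedness of $L$, then use Proposition~\ref{prop:concavityOfShapeFcn} to conclude that $\overline{L}_T(v,\psi^T(v))$ has finite $\limsup$. Your extra remark that the superdifferential of a concave function at an interior point is bounded makes explicit what the paper leaves implicit in the final sentence of its proof.
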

\begin{proof}
    By \eqref{eq:secondDerivAssumption}, there are $R,C_1>0$ such that if $\|x\|_2 > R$, then 
    \[\sup_{\|r\|_2\le \delta_0}\|\nabla^2 L(x+v+r)\| \le C_1 L(x + v).\]
    Let $C_2 = \sup_{\|x\|_2\le R+v+\delta_0} \|\nabla^2 L(x)\|$ and $C_3 = \min(\inf_{x\in \R^d}L(x),0).$ Then,
    \begin{align*}
        \frac{1}{T}\int_0^T\sup_{\|r\|_2\le \delta_0}&\|\nabla^2 L(\dot{\psi}^T(v)+v+r)\|dt \\
        &\le \frac{1}{T}\int_0^T C_2 \1_{\|\dot{\psi}^T(v)\|_2 \le R}dt+ \frac{1}{T}\int_0^T\1_{\|\dot{\psi}^T(v)\|_2 > R} C_1 (L(\dot{\psi}^T(v)+v)-C_3)dt\\
        & \le C_2 + \frac{C_1}{T}\int_0^T L(\dot{\psi}^T(v)+v)dt - C_1C_3.
    \end{align*}
    \Cref{prop:concavityOfShapeFcn} implies that the sequence $\frac{1}{T}\int_0^T L(\dot{\psi}^T(v)+v)dt$ is bounded, and thus $M_{v,\infty}$ is finite.
\end{proof}

\begin{lemma}\label{lem:secondOrderBounds2}
    For every $v\in \R^d,$ we have
    \begin{equation}\label{NvFinite}
        N_{v,\infty} < \infty
    \end{equation}
    with $\Prb$-probability one.
\end{lemma}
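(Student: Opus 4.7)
The plan is to mirror the proof of \Cref{lem:secondOrderBounds1}, reducing the finiteness of $N_{v,\infty}$ to a Poisson integral bound over a tubular neighborhood of the optimal path.

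First, I would exploit the compact support of each $\varphi \in \mathscr{C}$ in $K$. Since $\Xi_{v+r}^*\nabla^2\varphi(\tau, z) = \nabla^2\varphi(\tau, z + \tau(v+r))$ vanishes outside an appropriate translate of $K$, there exist constants $R_0, R_1 > 0$ depending only on $K$ and $\|v\|_2$ such that, uniformly in $\|r\|_2 \le 1$,
\begin{equation*}
    (t-s)^2\,\|(\Xi_{v+r}^*\nabla^2\varphi)(t-s, z)\| \le R_0^2\,\|\varphi\|_{\mathscr{C}}\,\1_{U}(t-s, z),
\end{equation*}
where $U = [-R_0, R_0] \times \{z \in \R^d : \|z\|_2 \le R_1\}$. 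Next, Fubini allows interchanging the $dt$ and $\mathbf{N}$ integrations. With $\tau_T(s, y) := |\{t\in [0, T]:\,(t-s, \psi_t^T(v) - y) \in U\}|$, one has $\tau_T \le 2R_0$ and $\tau_T$ is supported on a random tubular neighborhood $\mathcal{T}_T$ of the graph $\{(t, \psi_t^T(v)):t\in[0,T]\}$. Thus the time-averaged integrand in the definition of $N_{v,\infty}$ is bounded above by
\begin{equation*}
    \frac{R_0^2}{T}\int \|\varphi\|_{\mathscr{C}}\,\tau_T(s,y)\,\mathbf{N}(ds, dy, d\varphi) \le \frac{2R_0^3}{T}\int\|\varphi\|_{\mathscr{C}}\,\1_{\mathcal{T}_T}(s, y)\,\mathbf{N}(ds, dy, d\varphi).
\end{equation*}

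The hard part is controlling this Poisson integral almost surely. Equivalently, writing
$W(t, x) := \int\|\varphi\|_{\mathscr{C}}\,\1_{U}(t-s, x-y)\,\mathbf{N}(ds, dy, d\varphi)$,
a stationary random field with $\E W(0,0) < \infty$ by \eqref{eq:varphiMoments}, it suffices to bound $\frac{1}{T}\int_0^T W(t, \psi_t^T(v))\,dt$ uniformly in $T$. Since $\psi^T(v)$ is coupled to $\mathbf{N}$, this cannot be handled by a direct ergodic argument on $W$. I would attack it by (i)~establishing a.s.\ bounds on the spatial oscillations of $\psi^T(v)$ using $\int_0^T L(\dot\psi_t^T(v) + v)\,dt = O(T)$ (from \Cref{prop:concavityOfShapeFcn} together with the comparison against the constant path) and the superlinearity assumption~\eqref{eq:superLinearGrowthAssumption}, to confine $\mathcal{T}_T$ to a random but structured region of volume $O(T)$; (ii)~covering the set of plausible realizations of $\mathcal{T}_T$ by a polynomially growing family of deterministic sets; (iii)~invoking the exponential moment~\eqref{eq:varphiMoments} to obtain exponential tails for the Poisson integral over each deterministic candidate; and (iv)~concluding via a union bound and Borel--Cantelli along $T_n \nearrow \infty$. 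The path-regularity inputs and Poissonian concentration estimates appropriate for this decoupling argument should be those developed in \Cref{sec:concentratioBounds}.
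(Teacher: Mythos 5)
Your overall architecture matches the paper's: bound the integrand using compact support to get an indicator on a fixed window around $(t-s,x-y)$, discretize the optimal path into unit boxes, and bound the resulting sum of Poisson weights along the (random) path by decoupling via a max over all admissible discretized paths, using the exponential moments from \eqref{eq:varphiMoments}. The paper implements this by appealing to \Cref{lem:pathWithSmallWeight} (a greedy-lattice-animal bound from \cite{LaGatta_Wehr2010}) applied to $Y_k=\sup_{(t,x)\in I_k}\int\|\nabla^2\varphi\|_{L^\infty}\mathbf{1}_{\|x-y\|_2,|t-s|\le R}\mathbf{N}(ds,dy,d\varphi)$, combined with the bound $m(\psi^T(v))=O(T)$ obtained from \Cref{rem:linearGrowthmGamma} together with the observation that shearing increases the discretized length by at most a multiplicative constant.

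The gap is in step (ii) of your plan, where you propose to cover the plausible realizations of the tubular neighborhood $\mathcal{T}_T$ by a \emph{polynomially} growing family of deterministic sets. The discretization $S(\psi^T(v))$ is a connected lattice animal in $\Z^{d+1}$ of size $m\asymp T$, and the number of such animals containing the origin grows \emph{exponentially} in $T$ (cf.\ \Cref{lem:sizeOfSn}, giving $\le 3^{2(d+1)n}$). A polynomial family cannot cover all candidate animals, and coarser alternatives fail quantitatively: if you instead bound all $O(T^{d+1})$ box weights in the ambient region by their a.s.\ $O(\log T)$ maximum and sum over the $O(T)$ boxes the path visits, you get $O(T\log T)$, not $O(T)$. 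The union bound must be taken over exponentially many lattice animals, and it is precisely here that the exponential moment hypothesis \eqref{eq:varphiMoments}, fed through \Cref{lem:expMomentsF} and the finite-dependence-range version of the concentration estimate, becomes essential: the exponential tail of $\sum_{i}Y_{k_i}$ must beat the exponential entropy of $\mathcal{S}(n)$. This is exactly what \Cref{lem:pathWithSmallWeight} packages. With that lemma in place of your step (ii)/(iii)/(iv), and with the shear-invariance bookkeeping $m(\psi^T(v))\le Mm(\gamma^T(v))$ to translate the $O(T)$ length bound from the unsheared picture, the argument closes; as written, the polynomial covering claim would not.
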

We postpone the proof of \Cref{lem:secondOrderBounds2} to \Cref{sec:appendix} because it requires machinery and notation introduced in \Cref{sec:concentratioBounds} and is mainly technical.

We are now prepared to give the proof of  \Cref{thm:mainDiffTheorem}.
\begin{proof}[Proof of \Cref{thm:mainDiffTheorem}]
    For every $x,v,w\in \R^d$, where $v,w$ satisfy $\|v-w\|_2 \le \delta_0,$ Taylor's Theorem implies
    \begin{equation}\label{VTaylorBound}
        L(x + w) \le L(x + v) + \Big\langle w-v, \nabla L(x+v)\Big\rangle + \frac{1}{2}\|w-v\|_2^2\sup_{\|r\|\le \delta_0}\|\nabla^2 L(x+v+r)\|.
    \end{equation}
    Also, if $\|v-w\|_2\le 1$, then for every $s\in \R$ and $\varphi\in \mathscr{C},$
    \begin{align}\label{varphiTaylorBound}
        \Big|\varphi(s,x +  s w) - \varphi(s,x& + sv) - s \Big\langle w-v, \nabla \varphi(s,x + sv)\Big\rangle\Big|\notag\\
        & \le  \frac{1}{2}s^2\|w-v\|_2^2 \sup_{\|r\|_2\le 1}\|\nabla^2 \varphi(s,x+s(v+r))\|\notag\\
        & = \frac{1}{2}s^2\|w-v\|_2^2 \sup_{\|r\|_2\le 1}\|\Xi_{v+r}^*\nabla^2 \varphi(s,x)\|.
    \end{align}
    Using \eqref{varphiTaylorBound}, we obtain
    \begin{align}\label{FwUpperBound}
        F_w(t,x) & = \int \varphi(t-s,x-y+(t-s)w)\mathbf{N}(ds,dy,d\varphi)\notag\\
        &\le F_v(t,x) + \Big\langle w-v , \int (t-s)(\Xi_v^*\nabla \varphi)(t-s,x-y)\mathbf{N}(ds,dy,d\varphi)\Big\rangle\notag\\
        &\qquad + \frac{1}{2}\|w-v\|_2^2 \int(t-s)^2 \sup_{\|r\|_2\le 1}\|( \Xi_{v+r}^*\nabla^2  \varphi)(t-s,x-y)\|\mathbf{N}(ds,dy,d\varphi).
    \end{align}

    Combining \eqref{VTaylorBound} and \eqref{FwUpperBound}, we obtain
    \begin{align}\label{eq:BwvUpperBound}
        B_*^T(w) \le B_*^T(v) + \Big \langle w-v, G_T(\psi^T(v))\Big\rangle + \frac{1}{2}\|w-v\|_2^2 h_T(v)
    \end{align}
    where $G_T$ is defined for $\gamma\in W^{1,1}([0,T];\R^d)$ by
    \[G_T(\gamma) = \int_0^T \nabla L(\dot \gamma_t+v)dt + \int_0^T \int (t-s)(\Xi_v^*\nabla \varphi)(t-s,\gamma_t-y)\mathbf{N}(ds,dy,d\varphi)dt\]
    and $h_T(v)$ is defined as 
    \begin{align*}
        h_T(v) = \int_0^T& \sup_{\|r\|_2\le \delta_0}\|\nabla^2 L(\dot{ \psi}^T(v)+v+r)\|dt \\
        & + \int_0^T \int(t-s)^2 \sup_{\|r\|_2\le \delta_0}\|(\Xi_{v+r}^*\nabla^2  \varphi)(t-s,\psi^T(v)-y)\|\mathbf{N}(ds,dy,d\varphi)dt.
    \end{align*}
    By \Cref{lem:secondOrderBounds1,lem:secondOrderBounds2}, for sufficiently large $T$,
    \[\frac{1}{T}h_T(v) \le M_{v,\infty} + N_{v,\infty} + 1.\]
    So, for sufficiently large $T$, $\frac{1}{T}B_*^T$ satisfies the approximate linear domination requirement in \eqref{finiteDerivativeInequality} at $v$: for any $w$ satisfying $\|w-v\|_2\le \delta_0,$
    \begin{equation}\label{eq:approxLinearDominationB}
        \frac{1}{T}B_*^T(w) \le \frac{1}{T}B_*^T(v) + \Big \langle w-v, \frac{1}{T}G_T(\psi^T(v))\Big\rangle + \frac{1}{2}\|w-v\|_2^2 (M_{v,\infty} + N_{v,\infty} + 1).
    \end{equation}
    Let $\mathcal{D}$ be any countable dense subset of $\R^d.$ With $\Prb$-probability one, for all $w\in \mathcal{D},$ and all sequences $T_n\nearrow+\infty$, $\lim_{n\to \infty}\frac{1}{T_n}B_*^{T_n}(w) = \Lambda(w).$ Also, $\Lambda$ is convex by \Cref{thm:shapeTheorem}. \Cref{lem:deterministicDifferentiability} then implies that $\Lambda$ is differentiable at $v$ and that the limit 
    \[\nabla \Lambda(v) = \lim_{T\to \infty}\frac{1}{T}\int_0^T \Big[\nabla L(\dot {\psi}^T(v)+v) + \int (t-s)(\Xi_v^*\nabla \varphi)(t-s,\psi_t^T(v)-y)\mathbf{N}(ds,dy,d\varphi)\Big]dt\]
    holds $\Prb$-almost surely. Applying the change of variables $\gamma = \Xi_v \psi$ 
to the above and using \eqref{shearedEnvironmentEquality} of \Cref{lem:measurableSelection}, we derive \eqref{eq:shapeFcnDerivative}.
\end{proof}

\section{Bounds on Optimal Paths and Their Actions}\label{sec:concentratioBounds}
In this section, we will prove several results concerning $A_*^T(v)$ and~$\gamma^T(v)$ that will be useful in establishing \Cref{thm:shapeTheorem} and \Cref{lem:secondOrderBounds2}. Our analysis is similar to that in \cite{LaGatta_Wehr2010}. We present the details here because our situation is slightly different. Specifically, in \cite{LaGatta_Wehr2010} the authors consider geodesics in a random Riemannian metric, which can be viewed as a 
space-dependent random Lagrangian (with finite range dependence) but there is no random potential in that setting. We instead take a deterministic Lagrangian $L$ and include a random potential $F$. Our analysis is also similar to the analysis in the proof of Lemma 3 in \cite{HoNe3} and Lemma 4.2 in \cite{BCK:MR3110798}.

Let us first prove that $F$ and its derivatives have exponential moments, allowing us to apply standard concentration of measure techniques.
\begin{lemma}\label{lem:expMomentsF}
    Let $r > 0.$ For some $\lambda > 0,$ for all $x\in \R^d$ and all $t\in \R,$ we have 
    \begin{equation}\label{eq:supExpMoment}
        \E\exp\Big(\lambda \sup_{\|x-y\|_2,|t-s|\le r}| F(s,y)|\Big) < \infty
    \end{equation}
    and
    \begin{equation}\label{eq:derivExpMoment}
        \E\exp\Big(\lambda \sup_{\|x-y\|_2,|t-s|\le r}\int \1_{\|y-y'\|_2,|s-s'|<r}\|\nabla^2 \varphi\|_{L^\infty} \mathbf{N}(ds',dy',d\varphi)\Big) < \infty.
    \end{equation}
\end{lemma}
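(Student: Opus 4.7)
\emph{Plan.} By the space--time stationarity of the marked Poisson process $\mathbf{N}$, it suffices to prove both statements with $(t,x)=(0,0)$. Fix the box $B=\{(s,y)\in\R\times\R^d:|s|\le r,\ \|y\|_2\le r\}$. The unifying idea is that compact support of every mark (namely $\mathrm{supp}(\varphi)\subset K$ for each $\varphi\in\mathscr{C}$) allows us to dominate each supremum over the uncountable box $B$ by a single integral against $\mathbf{N}$ over a fixed bounded region of $\R\times\R^d$; the Laplace functional of the marked Poisson process, combined with the exponential moment assumption~\eqref{eq:varphiMoments}, then closes the estimate.

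\textbf{Replacing the supremum by a Poissonian integral.} For~\eqref{eq:supExpMoment}, for each $(s,y)\in B$ the summand $\varphi_i(s-t_i,y-x_i)$ vanishes unless $(t_i,x_i)\in (s,y)-K\subset B-K=:K_1$, so by the triangle inequality and $\|\varphi\|_{L^\infty(K)}\le\|\varphi\|_\mathscr{C}$,
\begin{equation*}
\sup_{(s,y)\in B}|F(s,y)|\ \le\ \int \1_{(t',x')\in K_1}\,\|\varphi\|_\mathscr{C}\,\mathbf{N}(dt',dx',d\varphi).
\end{equation*}
For~\eqref{eq:derivExpMoment}, whenever $(s,y)\in B$ and $|s-s'|<r$, $\|y-y'\|_2<r$, the point $(s',y')$ lies in $K_2:=\{(s',y'):|s'|\le 2r,\ \|y'\|_2\le 2r\}$, hence
\begin{equation*}
\sup_{(s,y)\in B}\int \1_{\|y-y'\|_2,|s-s'|<r}\|\nabla^2\varphi\|_{L^\infty}\,\mathbf{N}\ \le\ \int \1_{(t',x')\in K_2}\,\|\varphi\|_\mathscr{C}\,\mathbf{N}(dt',dx',d\varphi).
\end{equation*}
Both $K_1$ and $K_2$ are bounded subsets of $\R\times\R^d$ since $K$ is compact.

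\textbf{Laplace functional and main obstacle.} Both bounds have the common form $\int \1_{K'}(t',x')\,\|\varphi\|_\mathscr{C}\,\mathbf{N}(dt',dx',d\varphi)$ with $K'\subset\R\times\R^d$ bounded. The standard Laplace transform formula for nonnegative integrals against a Poisson random measure gives
\begin{equation*}
\E\exp\!\Bigl(\lambda\!\int \1_{K'}(t',x')\,\|\varphi\|_\mathscr{C}\,\mathbf{N}\Bigr)\ =\ \exp\!\Bigl(|K'|\int_\mathscr{C}\bigl(e^{\lambda\|\varphi\|_\mathscr{C}}-1\bigr)\,\varphiLaw(d\varphi)\Bigr),
\end{equation*}
which is finite for any $\lambda\in(0,\lambda_0]$, where $\lambda_0>0$ is the constant in~\eqref{eq:varphiMoments}. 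Choosing such a $\lambda$ yields both claims. The only technical subtlety is the passage from the uncountable supremum over $(s,y)\in B$ to a deterministic-set Poissonian integral, and this is handled cleanly by compact support of the marks together with the triangle inequality; no further obstacle arises.
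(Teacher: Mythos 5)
Your proof is correct and takes essentially the same approach as the paper: both bound the supremum by a deterministic Poissonian integral over a fixed bounded region using the uniform compact support of the marks, and then control the exponential moment via the marked Poisson structure together with assumption~\eqref{eq:varphiMoments}. The only cosmetic difference is that you invoke the Laplace functional of $\mathbf{N}$ directly, whereas the paper conditions on the number $Y$ of Poisson points in the bounded box and uses the moment generating function of a Poisson random variable together with the i.i.d.\ marks; unwinding the Laplace functional gives exactly that computation.
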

\begin{proof}
    We will prove \eqref{eq:supExpMoment}, and \eqref{eq:derivExpMoment} can be derived similarly.

    Since the support of $\varphi\in \mathscr{C}$ is uniformly bounded, there is $R>0$ such that
    \begin{align*}
        \sup_{\|x-y\|_2,|t-s|\le r}| F(s,y)| \le \int \1_{\|x-y\|_2,|t-s|\le R}\|\varphi\|_{L^\infty}\mathbf{N}(ds,dy,d\varphi).
    \end{align*}
    Thus, conditioned on there being $Y$ Poisson points $(t_1,x_1)\dots,(t_Y,x_Y)$ in $\R\times \R^d$ satisfying $\|x-x_i\|\le R$ and $|t-t_i|\le R,$ we have 
    \begin{equation}
        \sup_{\|x-y\|_2,|t-s|\le r}| F(s,y)| \le \sum_{i=1}^Y \|\varphi_i\|_{L^\infty},
    \end{equation}
    where $\|\varphi_i\|_{L^\infty}$ are independent and identically distributed random variables. Also, $Y$ is independent of the $\varphi_i$'s and has a Poisson distribution.
 In particular, it has finite exponential moments of all orders. For $A = \{(s,y)\in \R\times \R^d\,:\,\|x-y\|,|t-s|\le R\}$, let $\mathcal{F}(A)$ denote the $\sigma$-algebra generated by the restriction of $\mathbf{N}$ to $A\times \mathscr{C}$. Then,
    recalling~\eqref{eq:varphiMoments}, we obtain
    \begin{align*}
        \E\exp\Big(\lambda \sup_{\|x-y\|_2,|t-s|\le r}| F(s,y)|\Big) &= \E\Big[\E[\exp\Big(\lambda \sup_{\|x-y\|_2,|t-s|\le r}| F(s,y)\Big)\,|\,\mathcal{F}(A)]\Big]\\
        & \le \E\Big[\E[\exp\Big(\lambda \sum_{i=1}^Y \|\varphi_i\|_{L^\infty}\Big)\,|\,\mathcal{F}(A)]\Big]\\
        & \le \E\Big[(\varphiLaw\exp(\lambda \|\varphi_1\|_{L^\infty}))^Y\Big]\\
        & = \E\exp\Big( Y \log \varphiLaw\exp(\lambda \|\varphi_1\|_{L^\infty})\Big)\\
        & < \infty.
    \end{align*}
\end{proof}

Now we will introduce some notation allowing us to study discretizations of continuous paths. For $k\in \Z^{d+1}$, we define the box with corner $k$ to be 
\[I_k = k + [0,1)^{d+1}.\]
Note that the $I_k$ are disjoint and 
\[\R^{d+1} = \bigcup_{k\in \Z^{d+1}}I_k.\]

We will call a path $s:\{1,\dots, m\}\to \Z^{d+1}$ satisfying the condition that $\|s(i+1)-s(i)\|_\infty\le 1$ for all $i\in \{1,\dots, m-1\}$ a \textit{nearest $\ell^\infty$-neighbor} path. Let $\mathcal{S}(n)$ be the set of subsets $A\subset \Z^{d+1}$ satisfying the following: 
\begin{itemize}
    \item $0\in A$;
    \item $A$ has $n$ elements;
    \item the set $A$ is connected in the following sense: for any $x,y\in A$, there is a nearest $\ell^\infty$-neighbor path in $A$ connecting $x$ and $y$.
\end{itemize}
The fact that the size of $\mathcal{S}(n)$ grows at most exponentially follows from the same techniques in the theory of lattice animals, see Lemma 3 of \cite{Klarner:MR214489} for example. We present an easy to derive non-optimal result on exponential growth here for completeness.
\begin{lemma}\label{lem:sizeOfSn}
    The set $\mathcal{S}(n)$ has at most $3^{2(d+1)n}$ elements.
\end{lemma}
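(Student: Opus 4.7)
The plan is to produce an injective encoding of each element of $\mathcal{S}(n)$ by a nearest $\ell^\infty$-neighbor walk on $\Z^{d+1}$ starting at the origin, and then to bound the number of such walks by multiplying the number of choices at each step.

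Given $A\in\mathcal{S}(n)$, I would first fix an arbitrary total order on $\Z^{d+1}$ and consider the graph on $A$ whose edges connect pairs of vertices at $\ell^\infty$-distance at most $1$. Because $A$ is connected in this graph, it admits a spanning tree $T_A$ rooted at $0$. I would then encode $A$ by the depth-first search traversal of $T_A$ that explores the children of each vertex according to the fixed order. This traversal is a closed walk on $T_A$ that uses every edge of $T_A$ exactly twice (once downward, once upward), hence has length exactly $2(n-1)$, and each of its steps lies between two $\ell^\infty$-neighbors in $\Z^{d+1}$, so it is a nearest $\ell^\infty$-neighbor path in the sense of the definition preceding the lemma. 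Moreover, the set of sites visited by this walk is exactly $A$, so the map $A\mapsto(\text{DFS walk of }T_A)$ is injective.

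It remains to count the nearest $\ell^\infty$-neighbor walks of length $2(n-1)$ starting at $0$. At each of the $2(n-1)$ steps the walker can move to any of the at most $3^{d+1}$ points $s(i)+\Delta$ with $\Delta\in\{-1,0,1\}^{d+1}$, giving at most $\bigl(3^{d+1}\bigr)^{2(n-1)}=3^{2(d+1)(n-1)}\le 3^{2(d+1)n}$ such walks. Combining this with the injective encoding yields $|\mathcal{S}(n)|\le 3^{2(d+1)n}$, as claimed.

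There is no real obstacle here; the only thing to check carefully is that the spanning-tree traversal can be made canonical (using the fixed order on $\Z^{d+1}$) so that $A\mapsto T_A$ and hence $A\mapsto$(its DFS walk) is well-defined and injective. The bound is clearly very crude, in line with the remark that a non-optimal estimate is sufficient.
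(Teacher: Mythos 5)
Your argument is correct and is essentially the same as the paper's: the paper cites as "a fact from graph theory" the existence of a nearest-$\ell^\infty$-neighbor walk of length $2n$ covering $A$, and then counts walks step by step with at most $3^{d+1}$ choices per step. You have simply made the cited graph-theory fact explicit by constructing the walk as the depth-first traversal of a canonically chosen spanning tree rooted at $0$, which also makes the injectivity of the encoding transparent.
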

\begin{proof}
    It is a fact from graph theory that if $A\in \mathcal{S}(n)$, then there is a nearest $\ell^\infty$-neighbor path $s:\{1,\dots, 2n\}\to A$ whose image is all of $A$. Thus, the number of sets in $\mathcal{S}(n)$ is bounded by the number of such paths. The set of nearest $\ell^\infty$-neighbor paths of length $2n$ is at most $3^{2(d+1)n}$, since each step of such a path has at most $3^{d+1}$ options. 
\end{proof}

The following is a useful lemma that says any subset of $\Z^{d+1}$ can be partitioned into a constant number of subsets with points far apart.
\begin{lemma}\label{lem:constantSizePartition}
    For any $r > 0$ and $d\in \N$, there is $N = N(r)>0$ such that the following holds. For every subset $A\subset \Z^{d+1},$ there is a disjoint partition of $A$, $A_1,\dots, A_N$, such that for all $p=1,\dots, N$ and all $x,y\in A_p$ with $x\neq y,$
    \[\|x-y\|_\infty > r.\]
\end{lemma}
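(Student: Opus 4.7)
The plan is to use a standard residue-class coloring of $\Z^{d+1}$. Choose any integer $m > r$, for concreteness $m = \lfloor r \rfloor + 1$, and set $N = m^{d+1}$. Then partition $\Z^{d+1}$ into the $N$ residue classes modulo $m$: for each $k \in \{0,1,\dots,m-1\}^{d+1}$, let
\[
C_k = k + m\Z^{d+1} = \{z\in\Z^{d+1}\,:\, z\equiv k \pmod{m}\ \text{componentwise}\}.
\]
These classes are disjoint and cover $\Z^{d+1}$. Intersecting with $A$ gives the disjoint partition $A_k = A \cap C_k$ of $A$ (with some parts possibly empty, which is harmless since $N$ is allowed to depend only on $r$; after relabeling we list the nonempty parts, padding with empty sets if one insists on exactly $N$ parts).

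The separation property is immediate: if $x,y\in A_k$ with $x\ne y$, then $x-y$ is a nonzero element of $m\Z^{d+1}$, so at least one of its coordinates is a nonzero integer multiple of $m$, whence
\[
\|x-y\|_\infty \ge m > r.
\]
This verifies the required inequality for every pair of distinct points in the same part of the partition, with the number of parts $N = m^{d+1} = (\lfloor r \rfloor + 1)^{d+1}$ depending only on $r$ and $d$.

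I do not anticipate any real obstacle here; the entire lemma is combinatorial and the construction is explicit. The only point to handle with any care is that the partition $\{A_k\}$ of $A$ may contain empty pieces when $A$ is small relative to $N$, but the statement of the lemma does not forbid empty parts, and in any case one may always pad with the empty set to reach exactly $N$ parts. This completes the planned argument.
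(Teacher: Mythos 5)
Your proof is correct and takes essentially the same approach as the paper: the paper also partitions $\Z^{d+1}$ into residue classes modulo an integer exceeding $r$ (it uses $w = \lceil r\rceil + 1$ where you use $\lfloor r\rfloor + 1$, an inessential difference) and observes that two distinct points in the same class differ by a nonzero multiple of the modulus in some coordinate. No gaps.
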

\begin{proof}
    Consider the set $\mathbf{R} := \Z^{d+1}\cap [0,w)^{d+1}$ where $w = \lceil r\rceil+1.$ Every $\ell\in \Z^{d+1}$ can be written uniquely as $\ell = k + w i$ for $k\in \mathbf{R}$ and $i\in \Z^{d+1}.$ Let $N$ be the number of elements in $\mathbf{R}$ and label the points of $\mathbf{R}$ as $k_1,\dots, k_N.$ Let $A_p$ be those points $\ell\in A$ that can be written as $\ell = k_p + w i$ for some (unique) $i\in \Z^{d+1}.$ Also, if $j,\ell\in A_p$ and $j\neq \ell,$ then $j = k_p + w i$ and $\ell = k_p + w i'$ for some $i\neq i'$ in $\Z^{d+1}$ and so 
    \[\|j - \ell\|_\infty = w\|i-i'\|_\infty > r.\]
    The set $\{A_1,\dots, A_N\}$ is the desired partition.
\end{proof}

We consider a family $(Y_k)_{k\in \Z^{d+1}}$ of identically distributed random variables. We will use $\Prb,\E$ to express probabilities and expectations with respect to this family. We assume that $R$, the range of dependence for this model, is finite, i.e., if $A,B\subset \Z^{d+1}$ and $\min_{k\in A,k'\in B}\|k-k'\|_\infty > R$, then the families $(Y_k)_{k\in A}$ and $(Y_k)_{k\in B}$ are independent of each other. We also assume that for some $\lambda_0>0$,
\begin{equation}\label{eq:YexpMoment}
    \E\exp(\lambda_0 |Y_1|) < \infty.
\end{equation}

The next lemma is given as Lemma 2.3 in \cite{LaGatta_Wehr2010}. See also Theorem 1 in~\cite{10.1214/aoap/1177005277} for a similar statement in the i.i.d.\ case with weaker moment assumptions.
\begin{lemma}[Lemma 2.3 in \cite{LaGatta_Wehr2010}]\label{lem:pathWithSmallWeight}
    Let $Y=(Y_k)_{k\in \Z^{d+1}}$ be a family of random variables satisfying the preceeding requirements. There is a deterministic $q>0$ and a random a.s.-finite number $N=N(\omega)$, such that if $n>N$ then 
    \[\max_{\{k_1,\dots,k_n\}\in \mathcal{S}(n)} \sum_{i=1}^n |Y_{k_i}|< qn.\]
\end{lemma}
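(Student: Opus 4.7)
The plan is to combine three ingredients already at hand: the exponential upper bound on $|\mathcal{S}(n)|$ from \Cref{lem:sizeOfSn}, the constant-size partition into $R$-separated classes from \Cref{lem:constantSizePartition}, and a Chernoff-type moment generating function estimate using the exponential moment hypothesis~\eqref{eq:YexpMoment}, followed by a Borel--Cantelli argument. The obstacle is that the $Y_k$ indexed by a set $A \in \mathcal{S}(n)$ can be highly correlated, so one cannot apply a Chernoff bound directly to $\sum_{k \in A}|Y_k|$; one must first break $A$ into independent sub-collections.

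Concretely, I would fix $\lambda \in (0,\lambda_0)$ and set $M := \log \E\exp(\lambda|Y_1|)<\infty$, and apply \Cref{lem:constantSizePartition} with $r = R$ to obtain a universal constant $N_0 = N_0(R,d)$ such that any $A \in \mathcal{S}(n)$ admits a partition $A = A_1 \sqcup \cdots \sqcup A_{N_0}$ with all pairs in a common $A_p$ separated by more than $R$ in $\ell^\infty$. By the finite-range-dependence hypothesis, $(Y_k)_{k \in A_p}$ is an i.i.d.\ family for each $p$, so by Markov's inequality
\[
\Prb\Big(\sum_{k \in A_p}|Y_k| > qn/N_0\Big) \le e^{|A_p|\,M}\, e^{-\lambda q n/N_0} \le e^{n(M - \lambda q / N_0)}.
\]
A union bound over the $N_0$ classes then gives the same estimate, up to the factor $N_0$, for $\Prb\big(\sum_{k \in A}|Y_k| > qn\big)$.

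Next, I would apply a union bound over $\mathcal{S}(n)$. Using \Cref{lem:sizeOfSn},
\[
\Prb\Big(\max_{A \in \mathcal{S}(n)}\sum_{k\in A}|Y_k| > qn\Big) \le 3^{2(d+1)n}\cdot N_0 \cdot e^{n(M - \lambda q/N_0)}.
\]
Choosing $q$ sufficiently large, namely $q > N_0\big(M + 2(d+1)\log 3\big)/\lambda$, makes the right-hand side summable in $n$. The Borel--Cantelli lemma then furnishes a $\Prb$-almost surely finite random index $N(\omega)$ past which the bad event does not occur, yielding exactly the stated conclusion.

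The main technical point is the interplay between the combinatorial growth rate $3^{2(d+1)n}$ of $|\mathcal{S}(n)|$ and the rate at which the Chernoff bound decays: the exponential-moment hypothesis~\eqref{eq:YexpMoment} is precisely what allows $\lambda q / N_0$ to be chosen arbitrarily large (by taking $q$ large) while keeping $M$ finite, so that the Chernoff exponent dominates the enumeration exponent. Everything else is bookkeeping that follows the pattern of Lemma 2.3 in \cite{LaGatta_Wehr2010}; the only adaptation is the partition step, which is where finite-range dependence (as opposed to independence) is handled.
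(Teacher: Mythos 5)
Your argument is correct and is essentially the proof of Lemma 2.3 in LaGatta--Wehr, which the paper cites rather than reproves: union bound over lattice animals (growth controlled by \Cref{lem:sizeOfSn}), a partition into separated classes (\Cref{lem:constantSizePartition}) to reduce to independent families, a Chernoff--Markov bound via the exponential moment hypothesis, and Borel--Cantelli. One small step worth making explicit: you assert that $(Y_k)_{k\in A_p}$ is an i.i.d.\ family because the points of $A_p$ are pairwise separated by more than $R$; the finite-range-dependence hypothesis as stated gives independence of $(Y_k)_{k\in A}$ from $(Y_k)_{k\in B}$ for $R$-separated \emph{sets}, and passing from pairwise separation to mutual independence requires a short induction (peel off one point at a time, taking $A=\{k_1\}$ and $B=\{k_2,\dots,k_m\}$, then recurse on $B$). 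With that observation inserted, the proof is complete.
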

For $k\in  \Z\times \Z^d,$ define 
\[F^*(k) := \inf_{(s,x)\in I_{k}}F(s,x).\]
\Cref{lem:expMomentsF}
allows us to apply \Cref{lem:pathWithSmallWeight} to $Y_k = F^*(k)$ and derive the following:
\begin{corollary}\label{cor:pathWithSmallWeightApplication1}
    There is $q>0$ and an a.s.-finite random $N = N(\omega)$ such that for all $n>N$ and all $\{k_1,\dots, k_n\}\in \mathcal{S}(n),$
    \begin{equation}
        \sum_{i=1}^n F^*(k_i) > -qn.
    \end{equation}
\end{corollary}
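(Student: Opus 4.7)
The plan is to verify that the family $\{Y_k = F^*(k)\}_{k \in \Z^{d+1}}$ satisfies the three hypotheses of Lemma~\ref{lem:pathWithSmallWeight} (identically distributed, exponential moment, finite range of dependence), and then read off the corollary from the conclusion of that lemma, using the trivial observation that $-\sum F^*(k_i) \le \sum |F^*(k_i)|$.

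First I would verify that $\{F^*(k)\}$ is identically distributed. Because $F$ is constructed from a Poisson process $\mathbf{N}$ on $\R \times \R^d \times \mathscr{C}$ whose intensity $dt \otimes dx \otimes \varphiLaw$ is invariant under integer translations of the $\R \times \R^d$ component, the random field $F$ is space-time stationary, and hence $F^*(k) = \inf_{(s,x) \in I_k} F(s,x)$ has the same law as $F^*(0)$ for every $k \in \Z^{d+1}$. Next I would verify the exponential moment condition \eqref{eq:YexpMoment}: since $I_k$ is a translate of the unit cube, there is a fixed $r > 0$ such that $|F^*(k)| \le \sup_{(s,y):\|x_k - y\|_2, |t_k - s| \le r} |F(s,y)|$ for some reference point $(t_k, x_k)$, and \eqref{eq:supExpMoment} of \Cref{lem:expMomentsF} immediately yields $\E \exp(\lambda_0 |F^*(0)|) < \infty$ for some $\lambda_0 > 0$.

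The main step, and the one requiring a moment of care, is establishing finite range of dependence. Fix the compact support $K \subset \R^{d+1}$ of the mollifiers $\varphi \in \mathscr{C}$, and choose $R$ large enough so that $K$ is contained in the cube $[-R/3, R/3]^{d+1}$. For any $k \in \Z^{d+1}$, the value of $F$ on $I_k$ depends only on those Poisson marks $(s,y,\varphi)$ in the enlarged set $\widetilde I_k := \{(s,y) : \inf_{(t,x)\in I_k} (t-s, x-y) \in K\} \times \mathscr{C}$, which is contained in $\bigl(k + [-R/3, R/3 + 1]^{d+1}\bigr) \times \mathscr{C}$. If $A, B \subset \Z^{d+1}$ satisfy $\min_{k \in A, k' \in B} \|k - k'\|_\infty > R$, then the sets $\bigcup_{k \in A} \widetilde I_k$ and $\bigcup_{k' \in B} \widetilde I_{k'}$ are disjoint subsets of $\R \times \R^d \times \mathscr{C}$, and since the restrictions of a Poisson process to disjoint sets are independent, the families $(F^*(k))_{k \in A}$ and $(F^*(k'))_{k' \in B}$ are independent.

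Having verified all hypotheses, I would directly apply \Cref{lem:pathWithSmallWeight} to obtain a deterministic $q_0 > 0$ and an a.s.-finite random $N(\omega)$ with
\[
\max_{\{k_1,\dots,k_n\} \in \mathcal{S}(n)} \sum_{i=1}^n |F^*(k_i)| < q_0 n \quad \text{for all } n > N(\omega).
\]
Setting $q = q_0$ and bounding $\sum_{i=1}^n F^*(k_i) \ge -\sum_{i=1}^n |F^*(k_i)| > -qn$ for every $\{k_1, \dots, k_n\} \in \mathcal{S}(n)$ yields the desired inequality. The only substantive obstacle is the range-of-dependence verification, and it is handled cleanly by the compact support of the mollifiers $\varphi$ built into the definition of $\mathscr{C}$.
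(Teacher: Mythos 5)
Your proposal is correct and follows essentially the same route as the paper, which simply invokes Lemma~\ref{lem:pathWithSmallWeight} for $Y_k = F^*(k)$ with the exponential-moment hypothesis supplied by Lemma~\ref{lem:expMomentsF}; you have merely spelled out the stationarity and finite-range-of-dependence checks (via compact support of the mollifiers) that the paper leaves implicit.
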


A continuous path $\gamma:[0,T]\to \R^d$ with $\gamma_0 = 0$ can be viewed as a subset of  $\R\times\R^d$. We
 define the connected set $S(\gamma)\subset \Z^{d+1}$ as the set of $k\in \Z^{d+1}$ such that
\[\{(t,\gamma_t)\,:\,t\in [0,T]\}\cap I_k \neq \emptyset.\]
We call $S(\gamma)$ the discretization of $\gamma.$ Since $\gamma$ is continuous, $S(\gamma)\in \mathcal{S}(m)$ for some minimal $m = m(\gamma).$ The number $m(\gamma)$ is the total number of boxes through which $\gamma$ travels. We call~$m(\gamma)$ the \textit{discretized length} of $\gamma.$

The following lemma relates the Euclidean length of the path $\gamma$ to its discretized length.
\begin{lemma}\label{lem:discretizedLength}
    There are $c,C > 0$ such that for all $T>0$ and all paths $\gamma\in W^{1,1}([0,T];\R^d)$ with $\gamma_0 = 0,$
    \begin{equation}\label{discretizedLengthLowerBound}
        \int_0^T\|\dot{\gamma}_t\|_2dt \ge c m(\gamma) - T - C.
    \end{equation}
\end{lemma}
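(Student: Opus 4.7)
The plan is a time-slab decomposition combined with an elementary geometric box-counting estimate. For each integer $j$ with $0 \le j \le \lfloor T \rfloor$, set $J_j := [j, j+1) \cap [0, T]$ and let $n_j$ be the number of $k' \in \Z^d$ such that $\gamma_t \in k' + [0,1)^d$ for some $t \in J_j$. Every box $I_k \subset \R^{d+1}$ met by the graph $\{(t, \gamma_t) : t \in [0, T]\}$ corresponds bijectively to a pair $(j, k')$ of this form, so
$$ m(\gamma) \;=\; \sum_{j=0}^{\lfloor T \rfloor} n_j. $$

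The geometric input is the claim that any continuous curve in $\R^d$ of arclength $\ell$ meets at most $2^d(2\ell + 1)$ unit cubes of the integer grid. To see this, parameterize by arclength and cut the curve into at most $\lceil 2\ell \rceil \le 2\ell + 1$ consecutive sub-arcs of arclength $\le 1/2$. Each sub-arc has diameter at most $1/2$ and is therefore contained in a translate of $[0, 1/2]^d$, which can overlap at most $2^d$ integer unit cubes, since in each coordinate an interval of length $1/2$ meets at most two unit intervals $[k, k+1)$.

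Applying this to the restriction of $\gamma$ to $J_j$, whose spatial length is $\ell_j := \int_{J_j} \|\dot\gamma_t\|_2 \, dt$, gives $n_j \le 2^d(2\ell_j + 1)$. Summing over $j$ and writing $L := \int_0^T \|\dot\gamma_t\|_2 \, dt$, we obtain
$$ m(\gamma) \;\le\; 2^{d+1} L \;+\; 2^d(\lfloor T \rfloor + 1), $$
and rearranging (using $\lfloor T \rfloor \le T$) yields $L \ge 2^{-d-1} m(\gamma) - T/2 - 1/2$, which is \eqref{discretizedLengthLowerBound} with, say, $c = 2^{-d-1}$ and $C = 1/2$. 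Absolute continuity of $W^{1,1}$ paths ensures that $\gamma$ is a continuous curve whose arclength equals $L$, so the geometric lemma applies as stated. There is no serious obstacle: the whole argument reduces to the elementary diameter-versus-grid observation above and additivity of arclength across the time slabs; the partial terminal slab when $T \notin \Z$ is harmlessly absorbed into the constant $C$.
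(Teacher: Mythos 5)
Your proof is correct, but it takes a genuinely different route from the paper's. The paper proves the bound by passing to the Euclidean length of the space-time graph $t\mapsto(t,\gamma_t)$, then invoking its partition lemma (Lemma~\ref{lem:constantSizePartition}) to split the discretization $S(\gamma)\subset\Z^{d+1}$ into $N$ sparse subsets whose elements are $\ell^\infty$-separated, and finally applying a pigeonhole argument: the largest subset has $\ge m(\gamma)/N$ elements, and the path must travel a definite $\ell^2$-distance between each consecutive pair, forcing $\int_0^T\sqrt{\|\dot\gamma_t\|_2^2+1}\,dt\ge C_1 m(\gamma)/N-C_1$. You instead slice time into unit slabs $J_j$, observe $m(\gamma)=\sum_j n_j$ (correct, since $(j,k')\in S(\gamma)$ iff $\gamma$ visits the spatial cube $k'$ during $J_j$), and bound each $n_j$ by an elementary geometric lemma — any connected curve of arclength $\ell$ in $\R^d$ meets at most $2^d(2\ell+1)$ unit grid cubes, because sub-arcs of arclength $\le 1/2$ have diameter $\le 1/2$, hence coordinatewise extent $\le 1/2$, hence meet at most $2^d$ cubes. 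Your approach is shorter, avoids the auxiliary partition lemma, and produces explicit constants $c=2^{-d-1}$, $C=1/2$ (in fact $-T/2$ instead of $-T$), whereas the paper's $N$ and $C_1$ are not made explicit. One tiny point worth writing carefully: when $\ell_j=0$ you have $\lceil 2\ell_j\rceil=0$ but still need one ``sub-arc'' (a single point) to cover the cube the stationary curve sits in; the count of sub-arcs should be $\max(\lceil 2\ell_j\rceil,1)$, which is still $\le 2\ell_j+1$, so the final inequality is unaffected. This handles both the constant-path case and the degenerate terminal slab when $T\notin\Z$ or $T\in\Z$.
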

\begin{proof}
    We will prove that 
    \begin{equation}\label{eq:euclideanLength}
        \int_0^T \sqrt{\|\dot\gamma_t\|_2^2 + 1}dt \ge c m(\gamma) - C.
    \end{equation}
    The left-hand side of the above is the Euclidean length of the path $t\mapsto (t,\gamma(t))$. Then, \eqref{discretizedLengthLowerBound} will follow since 
    \[\int_0^T \sqrt{\|\dot\gamma_t\|_2^2 + 1}dt  \le \int_0^T \|\dot\gamma_t\|_2dt + T.\]

    Using \Cref{lem:constantSizePartition}, we choose $N\in \N$ so that any subset of $\Z^{d+1}$ can be partitioned into $N$ disjoint subsets whose elements are greater than distance 1 apart. 

    Let $\gamma\in W^{1,1}([0,T];\R^d)$ and let $\{k_{1},\dots, k_m\} = S(\gamma)$ denote discretization of $\gamma$ with $m=m(\gamma)$ its discretized length. Let 
    \[\{\{k_{i_1^1},\dots, k_{i_{n_1}^1}\},\dots,\{k_{i_1^N},\dots k_{i_{n_N}^N}\}\}\]
    be the disjoint partition of $S(\gamma)$ given in \Cref{lem:constantSizePartition} such that for all $p=1,\dots, N$ and $j\neq j',$
    \[\|k_{i_j^p} - k_{i_{j'}^p}\|_\infty > 1.\]
    Let $p^*$ be such that $n_{p^*} = \max_{p=1,\dots, N} n_p.$ We clearly have 
\begin{equation}
\label{eq:npstar_lowerb}
    n_{p^*} \ge m(\gamma) / N. 
\end{equation}    
    For any $c >0,$ if $C \ge 2Nc$ and $m(\gamma) < 2N$, then inequality  \eqref{eq:euclideanLength} is trivially satisfied. Assume then that $m(\gamma) \ge 2N$. This implies $n_{p^*} \ge 2.$ 
    Let $J_j=I_{k_{i_j^{p^*}}}$. Then, 
    \begin{align}\label{pathLengthLowerBound}
        \int_0^T\sqrt{\|\dot{\gamma}_t\|_2^2 + 1}dt \ge \inf\Big\{\int_0^T&\sqrt{\|\dot{X}_t\|_2^2+1}dt\,:\,\notag\\
        &X\in W^{1,1}([0,T];\R^d)\textrm{ s.t. }\forall i\in \{1,\dots, n_{p^*}\}\,\,X \textrm{ touches } J_i\Big\}.
    \end{align}
    For a path $X$ satisfying the conditions in \eqref{pathLengthLowerBound}, let $t_1,\dots, t_{n_{p^*}}$ in $[0,T]$ be points such that $(t_1,X_{t_1})\in J_1,\dots (t_{n_{p^*}},X_{t_{n_{p^*}}})\in J_{n_{p^*}}$. Note that $\|(t_i,X_{t_i}) - (t_{i'},X_{t_{i'}})\|_\infty > 1$ for all $i\neq i'$. Without loss of generality suppose that $t_1<t_2<\dots<t_{n_{p^*}}$. Since the shortest path between any two points is a straight line, we obtain 
    \begin{align*}
        \int_0^T \sqrt{\|\dot{X}_t\|_2^2+1} dt &\ge \sum_{i=1}^{n_{p^*}-1} \int_{t_i}^{t_{i+1}}\sqrt{\|\dot{X}_t\|_2^2+1} dt \\
        & \ge \sum_{i=1}^{n_{p^*}-1}\|(t_{i+1},X_{t_{i+1}}) - (t_i,X_{t_i})\|_2.
    \end{align*}
    There is a constant $C_1 > 0$ such that $\|x\|_2 \ge C_1 \|x\|_\infty$ for all $x\in \R^{d+1},$ and so we obtain due to~\eqref{eq:npstar_lowerb}
    \begin{equation}\label{eq:euclideanpathLengthLowerBound}
        \int_0^T \sqrt{\|\dot{X}_t\|_2^2+1} dt \ge C_1 (n_{p^*}-1) > C_1 m(\gamma)/N - C_1.
    \end{equation}
Using \eqref{pathLengthLowerBound}, we now complete the proof of  
\eqref{eq:euclideanLength} and the entire lemma.
\end{proof}

Let $\Gamma_{0,\ast}^{0,T} = \bigcup_{y\in \R^d}\Gamma_{0,y}^{0,T}.$ 

\begin{lemma}\label{lem:lowerBoundAction}
    There is $q>0$ such that for every $M>0$ there is $C > 0$  and a $\Prb$-almost surely finite random time $T_0 = T_0(\omega)$ such that if $T>T_0$, then 
    \begin{equation}\label{eq:empiricalEnvironmentLowerBound}
        \frac{1}{T}\int_0^T F(t,\gamma_t)dt \ge -q \frac{m(\gamma)}{T}
    \end{equation}
    and
    \begin{equation}\label{eq:actionLowerBound2}
        \frac{1}{T}A^{0,T}(\gamma) \ge  M \frac{ m(\gamma)}{T} - C
    \end{equation}
    for all $\gamma\in \Gamma_{0,\ast}^{0,T}.$
\end{lemma}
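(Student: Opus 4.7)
The plan is to combine Corollary~\ref{cor:pathWithSmallWeightApplication1} (applied to the boxes visited by $\gamma$) with the superlinear growth of $L$ together with the lower bound on Euclidean length from \Cref{lem:discretizedLength}. Concretely, for any path $\gamma$ with $\gamma_0=0$ on $[0,T]$, its discretization $S(\gamma) = \{k_1,\dots,k_m\} \in \mathcal{S}(m(\gamma))$ is a connected subset of $\Z^{d+1}$. The key observation is that each box $I_k$ has unit length in the time direction, so the time spent by the graph $(t,\gamma_t)$ inside $I_k$ is some $\tau_k \in [0,1]$, and $\sum_k \tau_k = T$.

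For inequality \eqref{eq:empiricalEnvironmentLowerBound}, I would split
\[
\int_0^T F(t,\gamma_t)dt = \sum_{k\in S(\gamma)} \int_{\{t:(t,\gamma_t)\in I_k\}} F(t,\gamma_t)dt \;\ge\; \sum_{k\in S(\gamma)} \tau_k F^*(k) \;\ge\; \sum_{k\in S(\gamma)} F^*(k)\wedge 0,
\]
using $\tau_k\in[0,1]$ in the last step (when $F^*(k)<0$, $\tau_k F^*(k)\ge F^*(k)$; when $F^*(k)\ge 0$, $\tau_k F^*(k)\ge 0$). Since $|F^*(k)|\ge -(F^*(k)\wedge 0)$, one may invoke the underlying Lemma~\ref{lem:pathWithSmallWeight} (applied to $Y_k = F^*(k)$) to conclude that for $m(\gamma)>N(\omega)$,
\[
\sum_{k\in S(\gamma)} F^*(k)\wedge 0 \;\ge\; -\sum_{k\in S(\gamma)}|F^*(k)|\; >\; -q\, m(\gamma).
\]
To make the inequality hold uniformly in $\gamma$ for $T$ large enough, one uses that the projection of $(t,\gamma_t)$ to the time axis is all of $[0,T]$, forcing $m(\gamma)\ge \lceil T\rceil$; hence choosing the a.s.-finite $T_0(\omega) := N(\omega)+1$ ensures $m(\gamma)>N$ whenever $T>T_0$.

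For inequality \eqref{eq:actionLowerBound2}, given $M>0$, superlinearity~\eqref{eq:superLinearGrowthAssumption} provides $K>0$ and $c_K$ with $L(v)\ge K\|v\|_2 - c_K$ for all $v\in\R^d$; choose $K$ so that $Kc > M + q$ where $c$ is the constant from \Cref{lem:discretizedLength}. Then
\[
\int_0^T L(\dot\gamma_t)dt \;\ge\; K\int_0^T \|\dot\gamma_t\|_2\, dt - c_K T \;\ge\; K(cm(\gamma) - T - C_0) - c_K T,
\]
and adding \eqref{eq:empiricalEnvironmentLowerBound} multiplied by $T$ gives
\[
A^{0,T}(\gamma) \;\ge\; (Kc - q)\, m(\gamma) - (K + c_K)T - KC_0 \;\ge\; M\, m(\gamma) - (K+c_K)T - KC_0.
\]
Dividing by $T\ge 1$ and enlarging the constant absorbs $KC_0/T$ into a single $C=C(M)$, yielding \eqref{eq:actionLowerBound2}. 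The only subtle point is ensuring that a single $q$ (independent of $M$) works: this is automatic since $q$ comes from Corollary~\ref{cor:pathWithSmallWeightApplication1} applied once to $F^*$ and does not depend on the Lagrangian. The main obstacle is conceptual rather than technical: aligning the discrete combinatorial bound on $\sum F^*(k_i)$ along visited boxes with the continuous integral, which is handled by the $\tau_k\le 1$ argument above.
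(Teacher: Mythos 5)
Your proof is correct and follows essentially the same route as the paper: discretize the path, apply \Cref{cor:pathWithSmallWeightApplication1} (via \Cref{lem:pathWithSmallWeight} with $Y_k=F^*(k)$) along the visited boxes, and combine with the superlinearity of $L$ through \Cref{lem:discretizedLength}. Your intermediate bound $\sum_k\tau_kF^*(k)\ge\sum_k\bigl(F^*(k)\wedge0\bigr)\ge-\sum_k|F^*(k)|>-qm(\gamma)$ is in fact a bit more careful than the paper's displayed chain, which passes from $\sum_k\tau_kF^*(k)$ directly to $\sum_kF^*(k)$ --- a step that implicitly assumes $F^*(k)\le0$; the $F^*\wedge 0$ truncation you introduce closes that small gap while reaching the same conclusion.
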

\begin{proof}
    We will work on the almost sure event given in \Cref{cor:pathWithSmallWeightApplication1}. As a consequence, we may assume that $q>0$ and $N=N(\omega)\in \N$ are such that for all $n > N,$ $\{k_1,\dots, k_n\}\in \mathcal{S}$
    \[\sum_{i=1}^n F^*(k_i) > -qn.\]
    Let $T_0(\omega) = N(\omega)$ and take $T>T_0(\omega).$

    Let $\{k_1^*,\dots, k_m^*\}  = S(\gamma)$ be the discretization of $\gamma$, where $m= m(\gamma)$ is the discretized length of $\gamma$. Clearly we must have $m(\gamma) \ge \lfloor T\rfloor.$ Also, observe that the augmented path $t\mapsto (t,\gamma(t))$ spends time at most 1 in each box $I_{k_i^*}$ since the path is directed in time. So,
    \begin{align}\label{eq:potentialLowerBound}
        \frac{1}{T}\int_0^T F(t,\gamma_t)dt & \ge \frac{1}{T}\int_0^T\sum_{i=1}^{m(\gamma)}F^*(k_i^*)\1_{I_{k_i^*}}((\gamma(t),t))dt\notag\\
        &\ge \frac{1}{T}\sum_{i=1}^{m(\gamma)}F^*(k_{i}^*)\notag\\
        & \ge -q \frac{m(\gamma)}{T}.
    \end{align}
    This proves \eqref{eq:empiricalEnvironmentLowerBound}.

    By \eqref{eq:superLinearGrowthAssumption}, for any $r>0,$ there is $C_r>0$ such that 
    \begin{equation}\label{eq:kineticActionLowerBound}
        L(x) \ge r \|x\|_2 - C_r
    \end{equation}
    for all $x\in \R^d.$ Hence \eqref{eq:potentialLowerBound}, \eqref{eq:kineticActionLowerBound}, and \Cref{lem:discretizedLength} imply that for all $\gamma\in\Gamma_{0,*}^{0,T},$ 
    \begin{align*}
        \frac{1}{T}A^{0,T}(\gamma) & = \frac{1}{T}\int_0^T L(\dot{\gamma}_t)dt + \frac{1}{T}\int_0^T F(t,\gamma_t)dt\\
        & \ge \frac{r}{T}\int_0^T\|\dot{\gamma}_t\|_2 dt - C_r - \frac{qm(\gamma)}{T}\\
        & \ge \frac{r c m(\gamma)}{T} - \frac{rC}{T} - r - C_r  - \frac{qm(\gamma)}{T}.
    \end{align*}
    Let $M>0.$ Choose $r > 0$ such that $M = rc - q,$ so that above becomes 
    \begin{equation}\label{linearGrowthFormula}
        \frac{1}{T}A^{0,T}(\gamma) \ge M\frac{m(\gamma)}{T}-\frac{rC}{T} - r - C_r,
    \end{equation}
    and \eqref{eq:actionLowerBound2} follows. 
\end{proof}

\section{Proof of the Shape Theorem}\label{sec:ShapeTheorem}
 Since the method of proof is fairly standard (see for e.g. \cite{10.1007/BFb0074919},\cite{kickb:bakhtin2016}, or \cite{AuffingerDamronHanson_50Years:MR3729447}), we only give a sketch
of the proof of \Cref{thm:shapeTheorem}. See also \cite{Souganidis:MR1697831} or \cite{Rezakhanlou-Tarver:MR1756906} for more PDE based approaches to similar statements.
Fix $v\in \R^d.$ Define for $0\le t < T,$
\[A^{t,T}_*(v) = \inf\{A^{t,T}(\gamma)\,:\,\gamma\in \Gamma_{tv,Tv}^{t,T}\}.\]
Note that $A^T_*(v) = A^{0,T}_*(v)$. For every $0\le t<T,$
\[A_*^{0,T}(v) \le A_*^{0,t}(v)+ A_*^{t,T}(v)\]
since we can concatenate paths realizing the infima on the  right-hand side to estimate the  minimal action on the left-hand side. 
\begin{lemma}\label{lem:integrabilityOfPositiveAction}
    For all $v\in \R,$ $T>0,$
    \[\E\Big[\sup_{t\in [0,T]}\max(A_*^t(v),0)\Big] < \infty.\]
\end{lemma}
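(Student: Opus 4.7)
The plan is to obtain an explicit upper bound on $A_*^t(v)$ by evaluating the action on a convenient test path, namely the constant-velocity straight line from $0$ to $tv$, and then to bound the resulting expression in expectation using the stationarity and exponential integrability of the potential $F$.

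First I would fix $v\in\R^d$ and $T>0$, and for each $t\in(0,T]$ consider the path $\gamma^{(t)}\in\Gamma_{0,tv}^{0,t}$ defined by $\gamma^{(t)}_s=sv$, $s\in[0,t]$. Since $\dot\gamma^{(t)}_s\equiv v$, the definition \eqref{eq:AtTDef} gives
\[
A_*^t(v)\;\le\;A^{0,t}(\gamma^{(t)})\;=\;tL(v)+\int_0^t F(s,sv)\,ds.
\]
Therefore
\[
\sup_{t\in[0,T]}\max\bigl(A_*^t(v),0\bigr)\;\le\;T\,|L(v)|+\int_0^T |F(s,sv)|\,ds.
\]

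Next I would take expectations. Since $L(v)$ is deterministic, only the integral term requires care. By space-time stationarity of $F$ (which is built into the marked Poisson construction, as $dt\otimes dx\otimes\varphiLaw$ is translation-invariant), for every $s\in\R$ we have $\E|F(s,sv)|=\E|F(0,0)|$. The latter quantity is finite: indeed, applying \Cref{lem:expMomentsF} with any $r>0$ and any $(t,x)=(s,sv)$ yields $\E\exp(\lambda|F(s,sv)|)<\infty$ for some $\lambda>0$, which in particular gives $\E|F(0,0)|<\infty$. Strictly speaking, \Cref{lem:expMomentsF} appears in the next section in the paper's order, but its proof uses only the construction of $F$ from the marked Poisson process $\mathbf{N}$ and the moment condition \eqref{eq:varphiMoments}, so there is no circularity; alternatively, one can give a direct one-line argument from \eqref{eq:Fsumrepresentation} and \eqref{eq:varphiMoments}. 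By Fubini's theorem,
\[
\E\int_0^T|F(s,sv)|\,ds\;=\;\int_0^T\E|F(s,sv)|\,ds\;=\;T\,\E|F(0,0)|\;<\;\infty,
\]
and combining with the previous display concludes the proof.

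The main (and essentially only) obstacle is the integrability of $F$ at a single point; everything else is a deterministic upper bound via the straight-line test path together with stationarity. No shear or minimizer-regularity ingredients are needed here.
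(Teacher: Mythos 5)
Your argument is correct and follows the paper's proof exactly: both use the straight-line test path $\gamma_s = vs$ to obtain the deterministic bound $A_*^t(v)\le tL(v)+\int_0^t F(s,vs)\,ds$, then pass to $T\max(L(v),0)+\int_0^T|F(s,vs)|\,ds$ and conclude via $\E|F(s,vs)|<\infty$. The paper states the last step in one line; your expansion via stationarity, Fubini, and Lemma~\ref{lem:expMomentsF} is exactly the intended justification.
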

\begin{proof} Computing the action of the path $\gamma_s=vs$, we obtain
   \[A_*^t(v) \le t L(v) + \int_0^t F(s,vs)ds \le T \max(L(v),0) + \int_0^T |F(s,vs)|ds.\]
Since $\E[|F(s,vs)|] < \infty$, the result follows.
\end{proof}
The family of maps $(\mathcal{T}_v^t)_{t\ge 0}$ from the space of measures on $\R\times \R^d\times \mathscr{C}$ to itself defined by 
\[\mathcal{T}_v^t(\mathbf{N})(ds,dx,d\varphi) = \mathbf{N}(d(s+t),d(x+tv),d\varphi)\]
is ergodic with respect to the law of $\mathbf{N}$ by properties of the Poisson process. Also, 
\[A^{t,T}_*(v)(\mathbf{N}) =  A^{0,T-t}_*(v)(\mathcal{T}_v^{t}(\mathbf{N})).\]
Kingman's subadditive ergodic theorem then implies the existence of a deterministic $\Lambda(v)\in [-\infty,\infty)$ such that 
\[\Lambda(v) = \lim_{T\to \infty}\frac{1}{T}A^T_*(v)\]
$\Prb$-almost surely. \Cref{lem:lowerBoundAction} implies that
for all $v\in \R^d$
\begin{equation}
    \Lambda(v) > -\infty.
\end{equation}

Now let $v = \alpha v_1 + (1-\alpha )v_2$. We have 
\[\frac{1}{T}A^{0,T}_*(v) = \frac{1}{T}A_{0,Tv}^{0,T} \le \frac{1}{T}A^{0,\alpha T}_{0,\alpha T v_1} + \frac{1}{T}A^{\alpha T,T}_{\alpha T v_1, T v}.\]
Taking $T\to \infty$ and using the convergence in probability of the above terms implies
\[\Lambda(v) \le \alpha \Lambda(v_1) + (1-\alpha)\Lambda(v_2)\]
and thus the convexity of $\Lambda.$ This concludes the proof of \Cref{thm:shapeTheorem}.\epf

\section{Proof of the homogenization result.}
\label{sec:proof-homog}

In this section, we derive \Cref{th:main-HJB} from results given in \Cref{sec:concentratioBounds,sec:ShapeTheorem}.
We begin with the following useful lemma:

\begin{lemma}\label{rem:linearGrowthmGamma}
    For all $\delta>0$ there are $C,C'>0$ such that for all $v\in \R^d,$
    \begin{equation}\label{eq:discretizedLengthUpperBound}
        \limsup_{T\to \infty}\frac{1}{T}m(\gamma^T(v)) \le \delta \Lambda(v) + C
    \end{equation}
    and 
    \begin{equation}\label{eq:actionLowerBound}
        \liminf_{T\to \infty}\frac{1}{T}\int_0^T F(s,\gamma_s^T(v))ds \ge -\delta \Lambda(v) - C'
    \end{equation}
    with $\Prb$-probability one. 
\end{lemma}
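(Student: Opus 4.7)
The plan is to obtain both bounds as direct consequences of \Cref{lem:lowerBoundAction} applied to the minimizing path $\gamma^T(v)$, combined with the asymptotic identity $T^{-1}A^{0,T}(\gamma^T(v)) = T^{-1}A_*^T(v) \to \Lambda(v)$ supplied by \Cref{thm:shapeTheorem}. Since $\gamma^T(v) \in \Gamma^{0,T}_{0,Tv} \subset \Gamma^{0,T}_{0,*}$, the inequalities of \Cref{lem:lowerBoundAction} are available along it on the full-measure event produced by that lemma.

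For \eqref{eq:discretizedLengthUpperBound}, fix $\delta > 0$ and invoke \eqref{eq:actionLowerBound2} with the choice $M = 1/\delta$, which produces a deterministic constant $C_\delta > 0$ and an a.s.\ finite random time $T_0(\omega)$ such that for all $T > T_0$ and all admissible $\gamma$,
\[
\frac{1}{T}A^{0,T}(\gamma) \;\ge\; \frac{1}{\delta}\cdot\frac{m(\gamma)}{T} - C_\delta.
\]
Specializing to $\gamma = \gamma^T(v)$ and rearranging,
\[
\frac{m(\gamma^T(v))}{T} \;\le\; \delta\cdot\frac{A_*^T(v)}{T} + \delta C_\delta,
\]
so passing to the $\limsup$ and applying \Cref{thm:shapeTheorem} gives $\limsup_{T\to\infty} m(\gamma^T(v))/T \le \delta\Lambda(v) + \delta C_\delta$. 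Setting $C := \delta C_\delta$, which depends only on $\delta$ and not on $v$, yields \eqref{eq:discretizedLengthUpperBound}.

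For \eqref{eq:actionLowerBound}, use \eqref{eq:empiricalEnvironmentLowerBound} along $\gamma = \gamma^T(v)$ to obtain
\[
\frac{1}{T}\int_0^T F(s,\gamma_s^T(v))\,ds \;\ge\; -q\,\frac{m(\gamma^T(v))}{T},
\]
for the deterministic constant $q$ furnished by that lemma. Taking $\liminf$ on the left and $\limsup$ on the right and applying the first bound with the auxiliary parameter $\tilde\delta := \delta/q$ (producing some constant $\tilde C$), we obtain
\[
\liminf_{T\to\infty}\frac{1}{T}\int_0^T F(s,\gamma_s^T(v))\,ds \;\ge\; -q\bigl(\tilde\delta\,\Lambda(v) + \tilde C\bigr) = -\delta\,\Lambda(v) - q\tilde C,
\]
and we take $C' := q\tilde C$.

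I do not expect a major obstacle: the content of the lemma is essentially a repackaging of \Cref{lem:lowerBoundAction} in terms of the minimizer. The only point warranting care is uniformity in $v$. This is built in because the constants $q$ and $C_\delta$ (and hence $C,C'$) depend only on $\delta$ through the choice $M = 1/\delta$ in \eqref{eq:actionLowerBound2}, while the random threshold $T_0(\omega)$ above which the underlying inequalities hold is independent of $v$ as well, so the asymptotic bounds \eqref{eq:discretizedLengthUpperBound}--\eqref{eq:actionLowerBound} hold on a single $\Prb$-full event for each fixed $v$ (the null set coming from \Cref{thm:shapeTheorem} being the only $v$-dependent ingredient).
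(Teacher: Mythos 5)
Your proposal is correct and follows essentially the same route as the paper's proof: apply \Cref{lem:lowerBoundAction} with $M = 1/\delta$ along $\gamma^T(v)$ together with \Cref{thm:shapeTheorem} to get \eqref{eq:discretizedLengthUpperBound}, then combine \eqref{eq:empiricalEnvironmentLowerBound} with the first bound (at parameter $\delta/q$) to get \eqref{eq:actionLowerBound}. The extra care you took spelling out the rearrangement and the uniformity in $v$ is consistent with, and slightly more explicit than, the paper's version.
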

\begin{proof}
    Applying \Cref{lem:lowerBoundAction} (with $M=1/\delta$) to $\gamma^T(v)$ and using the convergence of $\frac{1}{T}A^T_*(v)$ to $\Lambda(v)$ as $T\to \infty$ given in \Cref{thm:shapeTheorem} we derive that for any $\delta>0$ there is $C>0$ such that for all $v\in \R^d$ the inequality \eqref{eq:discretizedLengthUpperBound} holds.
    
    Then, \eqref{eq:empiricalEnvironmentLowerBound} of \Cref{lem:lowerBoundAction} implies that there is a $q>0$ such that for all $v\in \R^d,$
    \[\liminf_{T\to \infty}\frac{1}{T}\int_0^T F(s,\gamma_s^T(v;A))ds \ge -q\limsup_{T\to \infty}\frac{1}{T}m(\gamma^T(v)).\]
    Now using~\eqref{eq:discretizedLengthUpperBound} (with $\delta/q$ in place of $\delta$) we obtain \eqref{eq:actionLowerBound}.
\end{proof}

\begin{proof}[Proof of \Cref{th:main-HJB}]
    The limit \eqref{eq:pontwiseHomogenization} is just a restatement of \eqref{eq:shapeTheoremLimit}. Strict convexity of  $\overline H$ is equivalent to differentiability of $\Lambda$ since $\overline H$ and $\Lambda$ are convex duals of each other.
    The verification of \eqref{eq:HJBHomogenized} is done by computing the partial derivatives of $\overline{U}(t,x)=t\Lambda(x/t)$, which exist for all $t>0$ and $x\in \R^d$ due to \Cref{thm:mainDiffTheorem}, and then applying the duality identity
\[\Lambda(\xi) + \overline{H}(\nabla \Lambda(\xi)) = \langle \nabla \Lambda(\xi),\xi\rangle,\]
which holds for all $\xi \in \R^d$ (see Proposition 6.1.1 of \cite{lars-erik_2019}).
    
    The agreement with initial condition
    \begin{equation}
    \label{eq:limit-fundamental} 
    \lim_{t\searrow 0}t\Lambda(x/t)=\begin{cases}
    0,& x=0,\\
    +\infty, &x\ne 0,
    \end{cases}
    \end{equation}
     is trivial for $x=0.$ For $x\neq 0,$ it suffices to prove super-linear growth of $\Lambda$. Since $L$ is convex, we have 
    \begin{align*}
        \epsilon A^{0,t/\epsilon}_{0,x/\epsilon} &= \int_0^{t/\epsilon}L(\dot{\gamma}^{t/\epsilon}_s(x/t))ds +\epsilon \int_0^{t/\epsilon} F(s,\gamma^{t/\epsilon}_s(x/t))ds\notag \\
        &\ge tL(x/t) + \epsilon \int_0^{t/\epsilon} F(s,\gamma^{t/\epsilon}_s(x/t))ds.
    \end{align*}
    Therefore,
    \begin{equation}
    \label{eq:actionLower_corr}
     t \Lambda(x/t) \ge tL(x/t) + t\liminf_{\epsilon\searrow 0}\frac{\epsilon}{t} \int_0^{t/\epsilon} F(s,\gamma^{t/\epsilon}_s(x/t))ds.
    \end{equation}
    Using \eqref{eq:actionLowerBound} of \Cref{rem:linearGrowthmGamma} with $\delta=1$, we obtain that there is $C>0$ such that
    \begin{equation*}
        \liminf_{T\to \infty}\frac{1}{T}\int_0^T F(s,\gamma_s^T(x/t))ds \ge - \Lambda(x/t) - C.
    \end{equation*}
     Applying this with $T=t/\epsilon$  to \eqref{eq:actionLower_corr}, we obtain
        \begin{align*}
        t \Lambda(x/t)  \ge tL(x/t) - t\Lambda(x/t) - Ct.
    \end{align*}
    It follows that 
    \[ t \Lambda(x/t)\ge \frac{1}{2}tL(x/t) - \frac{1}{2}Ct.\]
    For $x\neq 0,$ the right-hand side diverges to $+\infty$ as $t\searrow 0$  by the requirement \eqref{eq:superLinearGrowthAssumption} of super-linear growth of $L$,
    which completes the proof of~\eqref{eq:limit-fundamental} and the entire theorem.
\end{proof}

\section{Technical Lemmas}\label{sec:appendix}
\begin{proof}[Proof of \Cref{lem:secondOrderBounds2}]
    Let $v\in \R^d$ and define
    \[Z_T = \frac{1}{T}\int_0^T \int(t-s)^2 \sup_{\|r\|_2\le 1}\|(\Xi_{v+r}^* \nabla^2 \varphi)(t-s,\psi_t^T(v)-y)\|\mathbf{N}(ds,dy,d\varphi)dt.\]
    Our goal is to prove that $\limsup_{T\to \infty}\frac{1}{T}Z_T<\infty.$

    There is $R > 0$ such that for all $\varphi\in \mathscr{C}$ and all $s,t\in \R$ and $x,y\in \R^d,$
    \begin{equation*}
        \sup_{\|r\|_2\le 1}\|(\Xi_{v+r}^*\nabla^2  \varphi)(t-s,x-y)\| \le \|\nabla^2\varphi\|_{L^\infty} \1_{\|x-y\|_2,|t-s|\le R}.
    \end{equation*}
    Thus, there exists $C_1>0$ such that for all $T>0,$
    \begin{align}
        Z_T & \le C_1 \frac{1}{T}\int_0^T \int \|\nabla^2\varphi\|_{L^\infty} \1_{\|\psi_t^T(v)-y\|_2,|t-s|\le R} \mathbf{N}(ds,dy,d\varphi)dt.\notag
    \end{align}
    Let $\{k_1,\dots,k_m\}\in \mathcal{S}(m)$ denote the discretization of $\psi^T(v)$ (as defined in \Cref{sec:concentratioBounds}). Then, \eqref{eq:crudeUpperBound} implies
    \begin{align}\label{eq:crudeUpperBound}
        Z_T & \le C_1 \frac{1}{T}\sum_{i=1}^{m(\psi^T(v))}\int_0^T \1_{(t,\psi^T_t(v))\in I_{k_i}} \int \|\nabla^2\varphi\|_{L^\infty} \1_{\|\psi_t^T(v)-y\|_2,|t-s|\le R} \mathbf{N}(ds,dy,d\varphi)dt\notag\\
        & \le C_1 \frac{1}{T}\sum_{i=1}^{m(\psi^T(v))}\sup_{(t,x)\in I_{k_i}}\int \|\nabla^2\varphi\|_{L^\infty} \1_{\|x-y\|_2,|t-s|\le R} \mathbf{N}(ds,dy,d\varphi).
    \end{align}
    Now we will apply \Cref{lem:pathWithSmallWeight} to the family $(Y_k)_{k\in\Z^d}$ defined by
    \[Y_k = \sup_{(t,x)\in I_k}\int \|\nabla^2\varphi\|_{L^\infty} \1_{\|x-y\|_2,|t-s|\le R} \mathbf{N}(ds,dy,d\varphi).\]
    Verifying the conditions of \Cref{lem:pathWithSmallWeight} is done similarly to the verification in the proof of \Cref{cor:pathWithSmallWeightApplication1} along with the exponential moment result \eqref{eq:derivExpMoment} of \Cref{lem:expMomentsF}. Using the result of \Cref{lem:pathWithSmallWeight} we derive that there is a $q > 0$ and a random $N=N(\omega)$ such that for all $n> N$ and all $\{k_1,\dots, k_n\}\in \mathcal{S}(n),$ 
    \begin{equation}\label{eq:largenSecondDeriveUpperBound}
        \sum_{i=1}^n\sup_{(t,x)\in I_{k_i}}\int \|\nabla^2\varphi\|_{L^\infty} \1_{\|x-y\|_2,|t-s|\le R} \mathbf{N}(ds,dy,d\varphi) \le qn.
    \end{equation}
    
    Recall that for a continuous path $\gamma\in \Gamma_{0,*}^{0,T},$ $m(\gamma)$ is its discretized length, as introduced in \Cref{sec:concentratioBounds}. For any $v\in \R^d,$ there is $M>0$ such that for any cube $I_k = k + [0,1)^{d+1}$, we can cover the sheared cube $\Xi_v I_k$ by at most $M$ cubes of the form $I_{k'}$ for $k'\in \Z^{d+1}.$ It follows that for any path $\gamma\in\Gamma_{0,*}^{0,T}$,
    \begin{equation}\label{eq:shearDiscretization}
        m(\Xi_v \gamma) \le M m(\gamma).
    \end{equation}    
    
    By \Cref{rem:linearGrowthmGamma}, there is $c > 0$ such that with probability one, $m(\gamma^T(v)) <  cT$ for sufficiently large $T$. By \eqref{eq:shearDiscretization}, there is $C_2 > 0$ such that 
    \[m(\psi^T(v)) \le C_2 T\] 
    for sufficiently large $T$. Also, $m(\psi^T(v))\ge \lfloor T\rfloor$ for all $v\in \R^d,T>0$ so $m(\psi^T(v)) \to \infty$ as $T\to \infty.$ 

    It follows from \eqref{eq:crudeUpperBound}, \eqref{eq:largenSecondDeriveUpperBound}, and the above reasoning that with probability one, for sufficiently large $T$,
    \begin{align*}
        Z_T & \le C_1 \frac{1}{T}\sum_{i=1}^{m(\psi^T(v))}\sup_{(t,x)\in I_{k_i}}\int \|\nabla^2\varphi\|_{L^\infty} \1_{\|x-y\|_2,|t-s|\le R} \mathbf{N}(ds,dy,d\varphi).\\
        & \le  C_1 \frac{ q  m(\psi^T(v))}{T}\\
        & \le  C_1 C_2 q.
    \end{align*}
    We can conclude that \eqref{NvFinite} holds with $\Prb$-probability one.
\end{proof}

In the proof of \Cref{lem:measurableSelection} we will require the following lemma ensuring at most linear growth of the environment $F$. Define the set 
\begin{equation}\label{eq:linearGrowthEvent}
    \Omega_{0} = \Big\{\sup_{x\in \R^d}\sup_{s\in[t,T]}\frac{|F(s,x)|}{\|x\|_2+1} < \infty,\,\,\forall t,T\in \R\text{ satisfying }t <T\Big\}.
\end{equation}
\begin{lemma}\label{lem:linearGrowthPotential}
    The event $\Omega_0$ has $\Prb$-probability one.
\end{lemma}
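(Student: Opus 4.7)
The plan is to reduce the statement, using the compact support of the marks, to a Borel--Cantelli estimate for an exponentially integrable lattice field derived from $\mathbf{N}$. Since the event inside $\Omega_0$ is monotone in the interval $[t,T]$, it is enough to verify the almost-sure bound for each fixed pair $t<T$ and then take the countable intersection over rational endpoints.

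Fix $t<T$ and pick $R>0$ with $K\subset [-R,R]^{d+1}$. Partition $\R^d$ into unit cubes $Q_k = k + [0,1)^d$, $k\in\Z^d$, and introduce the dominating random variables
\[
N_k \,:=\, \int \|\varphi\|_{L^\infty}\,\mathbf{1}_{[t-R,T+R]\times (k+[-R,R+1]^d)}(s',y')\,\mathbf{N}(ds',dy',d\varphi).
\]
Compact support of the $\varphi_i$'s (together with a shift-by-$(s,x)$ book-keeping) gives $|F(s,x)|\le N_k$ for every $s\in [t,T]$ and $x\in Q_k$. A short check shows that $\|x\|_2 + 1 \ge \tfrac12(\|k\|_\infty + 1)$ for $x\in Q_k$, so the lemma reduces to proving
\[
\sup_{k\in \Z^d}\frac{N_k}{\|k\|_\infty+1}<\infty\quad\Prb\text{-a.s.}
\]

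For this I would apply the Campbell formula for marked Poisson processes to obtain
\[
\E\exp(\mu N_k) \,=\, \exp\Bigl(V\cdot \bigl(\varphiLaw[\exp(\mu\|\varphi\|_{L^\infty})]-1\bigr)\Bigr),
\]
where $V = (T-t+2R)(2R+1)^d$ is independent of $k$. Because $\|\varphi\|_{L^\infty}\le \|\varphi\|_\mathscr{C}$, hypothesis \eqref{eq:varphiMoments} lets me choose $\mu>0$ with $\sup_{k\in\Z^d}\E\exp(\mu N_k)<\infty$. Markov's inequality then yields
\[
\Prb\bigl(N_k \ge c(\|k\|_\infty+1)\bigr)\le M e^{-\mu c(\|k\|_\infty+1)},
\]
which is summable over $k\in\Z^d$ once $c$ is taken large enough, and Borel--Cantelli produces a $\Prb$-a.s.\ finite $C(\omega)$ with $N_k\le C(\omega)(\|k\|_\infty+1)$ for all $k$.

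The main obstacle is purely bookkeeping: confirming that the enlarged time-space box used to define $N_k$ really does dominate $|F(s,x)|$ for all $(s,x)\in [t,T]\times Q_k$, and that its volume $V$ does not depend on $k$ so that the exponential moment bound is uniform. The positive temporal dependence range of $F$ does not cause any difficulty here, since Borel--Cantelli requires only marginal tail bounds on each $N_k$ and not independence across~$k$.
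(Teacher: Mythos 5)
Your proof is correct, and it takes a genuinely more direct route than the paper's. The paper first routes through its Lemma~\ref{lem:expMomentsF} to get exponential moments of local suprema of $F$, then exploits only the weaker consequence that a $d$-th moment $\E|X_1|^d$ is finite, and finishes with a Borel--Cantelli bound built from $\sum_j \Prb(|X_j|^d>j)\le \E|X_1|^d$ over a covering sequence $(x_j)$ chosen so that $j^{1/d}\sim\|x_j\|_2$. You instead dominate $|F|$ directly by lattice-indexed Poisson functionals $N_k$, compute their (uniform-in-$k$) exponential moments via Campbell's formula, and then get exponential tail bounds by Markov. This is cleaner and actually gives a stronger (exponential rather than polynomial) tail estimate, which is then trivially summable over $\Z^d$. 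The bookkeeping in your domination step ($|F(s,x)|\le N_k$ for $(s,x)\in[t,T]\times Q_k$, using the compact support $K\subset[-R,R]^{d+1}$) is sound, and the volume $V$ is indeed $k$-independent by translation invariance of the Lebesgue intensity. One cosmetic remark: the Chernoff bound $\Prb(N_k\ge c(\|k\|_\infty+1))\le Me^{-\mu c(\|k\|_\infty+1)}$ is already summable over $\Z^d$ for every $c>0$, since the shell $\{\|k\|_\infty=n\}$ has only polynomially many points; you do not need to take $c$ large.
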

\begin{proof}
    It suffices to prove that
    \begin{equation}\label{eq:linearGrowth}
        \sup_{x\in \R^d}\sup_{s\in[t,T]}\frac{|F(s,x)|}{\|x\|_2+1} < \infty
    \end{equation}
    holds $\Prb$-almost surely for fixed arbitrary $t < T,$ since the left-hand side of the above is increasing in $T-t.$ Indeed, if \eqref{eq:linearGrowth} holds for some $t < T$, then \eqref{eq:linearGrowth} also holds for $t' < T'$ if $t' > t$ and $T' < T.$ Thus if we have that \eqref{eq:linearGrowth} holds along a countable sequence $t_n,T_n$ with $t_n\to -\infty$ and $T_n\to \infty$ the result follows. Let us now fix some $t,T\in \R$ satisfying $t<T$.

    By \Cref{lem:expMomentsF}, there is a $\lambda > 0$ such that for all $x\in \R,$
    \begin{equation}\label{eq:XkMoments}
        \E\exp\Big(\lambda \sup_{t\le s\le T,\,\|x-y\|_2\le 1} |F(s,y)|\Big) < \infty.
    \end{equation}
    Let $(x_j)_{j\in \N}$ be a sequence of points in $\R^d$ such that 
    \[\R^d = \bigcup_{j\in \N} B_1(x_j)\]
    and 
    \[\limsup_{k\to \infty}\frac{j^{1/d}}{\|x_j\|_2} < \infty.\]

    Let $X_k = \sup_{t\le s\le T,\,y\in B_1(x_j)} |F(s,y)|.$ The sequence $(X_j)_{j\in \N}$ is identically distributed and we have $\E |X_1|^d < \infty$ by \eqref{eq:XkMoments}. Then,
    \begin{align*}
        \E |X_1|^d &= \int_0^\infty \Prb(|X_1|^d > t)dt\\
        & \ge \sum_{j=1}^\infty \Prb(|X_j|^d > j)
    \end{align*}
    and so the right-hand side is finite. By the Borel--Cantelli Lemma, there is a random $C=C(\omega)$ such that $|X_j|\le C(\omega)|j|^{1/d}$ for all $j\in \N.$ Let $C'>0$ be a number such that $|j|^{1/d} \le C'\|x_j\|_2$ for all $j\in \N$ satisfying $\|x_j\|>1.$ Let $x\in \R^d$ be such that $\|x\|_2 > 2.$ We have $x\in B_1(x_{j^*})$ for some $j^*\in \N$ and so
    \[\sup_{t\le s\le T}F(s,x)\le X_{j^*} \le C(\omega)|j^*|^{1/d} \le C(\omega)C' \|x_{j^*}\|_2 \le C(\omega)C'(\|x\|_2 +1).\]
    It follows that \eqref{eq:linearGrowth} holds for arbitrary $t<T$ $\Prb$-almost surely, and hence $\Omega_0$ itself holds $\Prb$-almost surely.
\end{proof}

Let $t<T$ and let $E^{t,T}$ be the subset of $C([t,T]\times \R^d;\R)$ consisting of functions with at most linear growth in space. In other words, $g\in E^{t,T}$ means that $g:[t,T]\times \R^d\to \R$ is continuous, and there are $a,b>0$ (depending on $g$) such that
\[|g(s,x)| \le a\|x\|_2 + b\]
for all $s\in[t,T]$ and $x\in \R^d.$ Note that if $\omega\in \Omega_0,$ then $F$ restricted to $[t,T]\times \R^d$ is in $E^{t,T}$ for all $t<T$. By \Cref{lem:linearGrowthPotential}, $\Omega_0$ has probability one. We can and will assume that $F$ is in $E^{t,T}$ if we neglect a probability zero event. For fixed arbitrary $g\in E^{t,T}$, $x,y\in \R^d$, we can consider the functional $A^{t,T}$ defined in \eqref{eq:AtTDef} to be a function from $\Gamma_{x,y}^{t,T}$ to $\R.$ Recall that unless otherwise specified we endow $\Gamma_{x,y}^{t,T}$ with the $W^{1,1}([t,T];\R^d)$ topology. We can also consider the weak $W^{1,1}$ topology on $\Gamma_{x,y}^{t,T}$, defined in the usual way using the dual space.

In order to prove \Cref{lem:measurableSelection} we will use some results from calculus of variations presented in Chapter 6 of \cite{CannarsaSinestrari_HJB}. In particular, the proof of Theorem 6.1.2 (and the additional comments in Remark 6.2.7) can be used to establish the following lemma.

\begin{lemma}\label{lem:weakContinuityofJ}
    Let $g\in E^{t,T}$ and define the functional
    \[J(\gamma) = \int_t^T L(\dot\gamma_s)ds + \int_t^T g(s,\gamma_s)ds.\]
    For every $x,y\in \R^d,$ the functional $J$ is $W^{1,1}$ weakly sequentially lower semicontinuous on $\Gamma_{x,y}^{t,T}$. In addition, for all $R\ge 0$, the set $\{\gamma\in\Gamma_{x,y}^{t,T}\,:\,J(\gamma)\le R\}$ is sequentially compact in the weak $W^{1,1}$ topology. 
\end{lemma}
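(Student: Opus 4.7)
The plan is to decompose $J = J_1 + J_2$ with kinetic part $J_1(\gamma)=\int_t^T L(\dot\gamma_s)\,ds$ and potential part $J_2(\gamma) = \int_t^T g(s,\gamma_s)\,ds$, and to treat them separately. Recall that weak $W^{1,1}$ convergence $\gamma^n \rightharpoonup \gamma$ in $\Gamma_{x,y}^{t,T}$ means $\gamma^n \to \gamma$ in $L^1([t,T];\R^d)$ together with $\dot\gamma^n \rightharpoonup \dot\gamma$ weakly in $L^1$. By the Dunford--Pettis theorem, any such sequence has equi-integrable derivatives; since all the $\gamma^n$ share the initial value $x$, the family $\{\gamma^n\}$ is then equicontinuous and uniformly bounded, so Arzel\`a--Ascoli (together with $L^1$ identification of the limit) upgrades the convergence to uniform convergence on $[t,T]$.

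For the lower semicontinuity of $J_1$, I would invoke the classical Tonelli--Serrin theorem: since $L$ is continuous and convex on $\R^d$, the functional $\eta \mapsto \int_t^T L(\eta_s)\,ds$ is sequentially weakly lower semicontinuous on $L^1([t,T];\R^d)$, and applying this to $\eta = \dot\gamma^n \rightharpoonup \dot\gamma$ yields $\liminf_n J_1(\gamma^n) \ge J_1(\gamma)$. For the potential term $J_2$, the uniform convergence observed above and continuity of $g$ give $g(s,\gamma^n_s) \to g(s,\gamma_s)$ pointwise, while the linear growth bound $|g(s,y)| \le a\|y\|_2 + b$ combined with the uniform $L^\infty$ bound on $\{\gamma^n\}$ provides a constant integrable majorant. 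The dominated convergence theorem then yields $J_2(\gamma^n) \to J_2(\gamma)$, so $J$ is the sum of a weakly l.s.c.\ and a weakly continuous functional and is therefore weakly l.s.c.

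For sequential compactness of $S_R := \{\gamma\in\Gamma_{x,y}^{t,T} : J(\gamma)\le R\}$, I would use the superlinear growth \eqref{eq:superLinearGrowthAssumption} of $L$ to control the total variation of paths in $S_R$. For any $A>0$ pick $C_A$ with $L(v) \ge A\|v\|_2 - C_A$; setting $M(\gamma) = \int_t^T \|\dot\gamma_s\|_2\,ds$ and using $\|\gamma_s\|_2 \le \|x\|_2 + M(\gamma)$, the linear bound on $g$ yields
\[
R \ge J(\gamma) \ge A M(\gamma) - C_A(T-t) - a(T-t)\bigl(\|x\|_2 + M(\gamma)\bigr) - b(T-t).
\]
Choosing $A > a(T-t)$ bounds $M(\gamma)$ uniformly on $S_R$, which in turn bounds $\|\gamma\|_\infty$ and hence $|J_2(\gamma)|$, giving a uniform upper bound on $J_1(\gamma)$. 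The de la Vall\'ee Poussin criterion, with the superlinear $L$ serving as the required test function, then yields equi-integrability of $\{\dot\gamma^n\}$ for any sequence in $S_R$, so Dunford--Pettis produces a subsequence with $\dot\gamma^{n_k} \rightharpoonup \zeta$ weakly in $L^1$. Defining $\gamma_s := x + \int_t^s \zeta_r\,dr$ and testing weak convergence against $\1_{[t,s]}$ shows $\gamma^{n_k} \to \gamma$ pointwise, hence uniformly by the equicontinuity noted above, so the endpoint $\gamma_T=y$ is preserved and $\gamma\in\Gamma_{x,y}^{t,T}$. The principal technical ingredient is the Tonelli--Serrin lower semicontinuity used for $J_1$; being a standard fact for continuous convex integrands, I would quote it from the calculus of variations literature rather than reprove it.
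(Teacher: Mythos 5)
The paper offers no proof of this lemma at all, deferring entirely to Cannarsa--Sinestrari Theorem~6.1.2 (Tonelli's existence theorem) together with Remark~6.2.7; your argument is precisely the standard direct-method proof underlying that cited result, built on convexity of $L$ for weak lower semicontinuity of the kinetic term, equicontinuity and uniform convergence for continuity of the potential term, and superlinearity of $L$ for equi-integrability via de la Vall\'ee Poussin and Dunford--Pettis. Your proof is correct and essentially the same as the cited approach --- the only unstated (but immediate) step is that the weak limit lies in the sublevel set, which follows from the lower semicontinuity you have already established, and one should tacitly replace $L$ by a nonnegative radial minorant when invoking de la Vall\'ee Poussin since $L$ need not itself be nonnegative or radial.
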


\begin{proof}[Proof of \Cref{lem:measurableSelection}]
    Fix $x,y\in \R^d.$ We will prove that a minimizer to \eqref{eq:p2pAction} holds for every $t,T\in \R$ satisfying $t < T$ on the event $\Omega_0$ defined in \eqref{eq:linearGrowthEvent}. By \Cref{lem:linearGrowthPotential} this event has $\Prb$-probability one. Let us now suppose that $\omega\in \Omega_0$, so that the restriction of $F$ to $[t,T]\times \R^d$ is in $E^{t,T}$ and let $t < T$.

    Existence of a minimizer to \eqref{eq:p2pAction} follows directly from Theorem 6.1.2 and Remark 6.2.7 in \cite{CannarsaSinestrari_HJB}, but we will sketch the argument assuming \Cref{lem:weakContinuityofJ}. Take a sequence $(\gamma_n)_{n\in \N}$ in $\Gamma_{x,y}^{t,T}$ such that
    \[\lim_{n\to \infty}A^{t,T}(\gamma_n)= A^{t,T}_{x,y}.\]
    Since for some $R>0,$ $(\gamma_n)_{n\in \N}\subset \{\gamma\in\Gamma_{x,y}^{t,T}\,:\,A^{t,T}(\gamma)\le A_{x,y}^{t,T} + R\}$, \Cref{lem:weakContinuityofJ} implies that there is a $W^{1,1}$ weakly convergent subsequence of $(\gamma_n)_{n\in \N}$ to some $\gamma^*.$ Then, since $A^{t,T}$ is $W^{1,1}$ weakly sequentially lower semicontinuous by \Cref{lem:weakContinuityofJ}, we must have $A^{t,T}(\gamma^*) \le A_{x,y}^{t,T},$ and so in fact $\gamma^*$ is a minimizer to \eqref{eq:p2pAction}.
    


    We now prove that there is a measurable map $\gamma^T(v):\Omega\to \Gamma_{0,Tv}^{0,T}$ that provides the infimum in \eqref{eq:p2pAction}. Consider the set-valued function 
    \begin{align*}
        \Psi:\Omega&\to \mathcal{P}(\Gamma_{0,Tv}^{0,T})\\ 
        \omega&\mapsto \{\gamma\in \Gamma_{0,Tv}^{0,T}\,:\, A^{0,T}(\gamma) = A_*^T(v)\},
    \end{align*}
    where $\mathcal{P}(\Gamma_{0,Tv}^{0,T})$ is the power set of $\Gamma_{0,Tv}^{0,T}.$ 

    We are going to apply the Kuratowski--Ryll-Nardzewski Selection Theorem (see Theorem 18.13 in \cite{Aliprantis:MR2378491}), which will allow 
    us to conclude that there is a measurable map $\gamma^T(v):\Omega\to \Gamma_{0,Tv}^{0,T}$ satisfying $\gamma^T(v)(\omega) \in \Psi(\omega)$, or, equivalently,
    \begin{equation}\label{gammaAnProperty}
        A^{0,T}(\gamma^T(v)) = A_*^{T}(v).
    \end{equation}
    First, note that $\Gamma_{0,Tv}^{0,T}$ endowed with the $W^{1,1}$ topology is a Polish space. We need to check that $\Psi$ is non-empty, takes values in the closed subsets of $\Gamma_{0,Tv}^{0,T},$ and satisfies a set valued measurability condition known as being weakly measurable.

    We have already verified that $\Psi$ is non-empty $\Prb$-almost surely. 

    The fact that $\Psi$ takes values in closed sets of $\Gamma_{0,vT}^{0,T}$ follows from \Cref{lem:weakContinuityofJ}. 

    We now prove that the map $\Psi$ is weakly measurable. Weak measurability means that for every open set $U\subset \Gamma_{0,Tv}^{0,T}$, the set 
    \[U_{\Psi^{-1}} := \{\omega\in \Omega\,:\, \Psi(\omega)\cap U \neq \emptyset\}\]
    is measurable in $\Omega.$ The intuition to proving measurability of $U_{\Psi^{-1}}$ is that we will prove that 1) we can check that a minimizer is in $U$ by checking only countably many elements of $U$ and 2) we can verify that any given path minimizes \eqref{eq:p2pAction} by comparison with a countable subset of $\Gamma_{0,Tv}^{0,T}$. The key will be selecting the right countable subset of $U$ and $\Gamma_{0,Tv}^{0,T}$ that accomplishes these two tasks.

    Let $\mathcal{G}$ be a countable family of continuous functions from $[0,T]\times \R^d$ to $\R$ with compact support that is dense in $C([0,T]\times \R^d;\R)$ in the topology of uniform convergence on compact sets. Let $\mathcal{B}$ be a countable basis of bounded, convex open sets for the $W^{1,1}$ topology on $\Gamma_{0,Tv}^{0,T}$. For an element $B\in \mathcal{B}$ we denote by $\overline{B}$ its closure in $\Gamma_{0,Tv}^{0,T}$. An argument 
similar to our proof of existence of a minimizer to \eqref{eq:AF_def} can be used to show that for all $G\in \mathcal{G}$ and $B\in \mathcal{B}$ there is a minimizer to the minimization problem
    \begin{equation}\label{eq:GBMinimization}
        I_{G,B} := \inf\Big\{\int_0^T V(\dot{\gamma}(t))dt + \int_0^T G(\gamma(t),t)dt\,:\,\gamma\in \overline{B}\Big\}.
    \end{equation}
    Indeed, \Cref{lem:weakContinuityofJ} implies that there is sequence $(\gamma_n)_{n\in \N}$ in $\overline{B}$ such that $A^{0,T}(\gamma_n)\to I_{G,B}$ as $n\to \infty$ and such that $(\gamma_n)_{n\in \N}$ converges weakly in $W^{1,1}$ to some element $\gamma^*\in \Gamma_{0,Tv}^{0,T}$. Since $\overline{B}$ is strongly closed and convex, $\overline{B}$ is closed in the weak $W^{1,1}$ topology. This implies that $\gamma^*\in \overline{B}.$ For $G\in \mathcal{G}$ and $B\in \mathcal{B}$ we denote by $\gamma_{G,B}\in \overline{B}$ some specific choice of minimizer to \eqref{eq:GBMinimization}. Because $\mathcal{B}$ is a basis for the topology on $\Gamma_{0,Tv}^{0,T},$ the family $\Gamma_{\mathcal{G},\mathcal{B}} := \{\gamma_{G,B}\}_{G\in \mathcal{G},B\in \mathcal{B}}$ is dense in $\Gamma_{0,Tv}^{0,T}.$ 

    We let $\mathcal{B}_U$ denote those $B\in \mathcal{B}$ such that $\overline{B}\subset U.$ We claim that 
    \begin{equation}\label{eq:UpsiMeasurable}
        U_{\Psi^{-1}}= \bigcup_{B\in\mathcal{B}_U}\bigcap_{k\in\N}\bigcup_{G\in \mathcal{G}}\bigcap_{\gamma'\in \Gamma_{\mathcal{G},\mathcal{B}} } C(k,\gamma_{G,B},\gamma')
    \end{equation}
    where 
    \[C(k,\gamma,\gamma') = \{\omega\in \Omega\,:\, A^{0,T}(\gamma) \le A^{0,T}(\gamma') + 1/k\}.\]
    For every $\gamma\in \Gamma_{0,Tv}^{0,T}$, the map $\omega\mapsto \int_0^T F(t,\gamma_t)dt$ is clearly measurable. So, for fixed $k,\gamma,\gamma'$, the set $C(k,\gamma,\gamma')$ is measurable. It follows that if \eqref{eq:UpsiMeasurable} holds, then $U_{\Psi^{-1}}$ is measurable and we can conclude that a measurable selection exists. We will now prove that the equality \eqref{eq:UpsiMeasurable} holds.

    First we will prove the forward inclusion of \eqref{eq:UpsiMeasurable}. Let $\omega$ be in $U_{\Psi^{-1}}$. By definition there is $\gamma^*\in U$ that minimizes \eqref{eq:AF_def}. Since $U$ is open, we can find $B\in \mathcal{B}$ be such that $\gamma^*\in B$ and $\overline{B}\subset U.$ Let $S = \sup_{\gamma\in B}\|\gamma\|_{L^\infty}.$ Since $B$ is bounded in $W^{1,1},$ $S$ is finite. For $k\in \N$, let $G_k\in \mathcal{G}$ be such that 
    \[\sup_{\|x\|_{2}  \le S,\,t\in [0,T]}|G_k(t,x) - F(t,x)| < \frac{1}{2T k}.\] 
    Then, by the minimality conditions of $\gamma_{G_k,B}$ and $\gamma^*$ and their bound in $L^\infty$,
    \begin{align*}
        A^{0,T}(\gamma_{G_k,B}) & = \int_0^T L(\dot \gamma_{G_k,B}(t))dt + \int_0^T F(t,\gamma_{G_k,B}(t))dt\\
        &\le \int_0^T L(\dot\gamma_{G_k,B}(t))dt + \int_0^T G_k(t,\gamma_{G_k,B}(t))dt + \int_0^T |G_k(t,\gamma_{G_k,B}(t)) - F(t,\gamma_{G_k,B}(t))|dt\\
        & \le \int_0^T L(\dot\gamma^*(t))dt + \int_0^T G_k(t,\gamma^*(t))dt + \frac{1}{2k}\\
        & \le \int_0^T L(\dot\gamma^*(t))dt + \int_0^T F(t,\gamma^*(t))dt + \frac{1}{k}\\
        & = A^T_*(v) + \frac{1}{k}\\
        & \le A^{0,T}(\gamma') + \frac{1}{k}
    \end{align*}
    for any $\gamma'\in \Gamma_{0,Tv}^{0,T}.$ To summarize, if $\omega\in U_{\Psi^{-1}}$, then there exists $B\in \mathcal{B}_U$ such that for all $k\in \N$ there is $G_k\in \mathcal{G}$ such that for all $\gamma'\in \Gamma_{\mathcal{G},\mathcal{B}}$ we have $A^{0,T}(\gamma_{G_k,B})\le A^{0,T}(\gamma') +1/k$. It follows that 
    \[U_{\Psi^{-1}}\subset \bigcup_{B\in \mathcal{B}_U}\bigcap_{k\in\N}\bigcup_{G\in \mathcal{G}}\bigcap_{\gamma'\in \Gamma_{\mathcal{G},\mathcal{B}} }C(k,\gamma_{G,B},\gamma').\]
    Also, note that we have shown that the sequence $(\gamma_{G_k,B})_{k\in \N}$ in $\Gamma_{\mathcal{G},\mathcal{B}}$ is such that $A^{0,T}(\gamma_{G_k,B})\to A^{T}_*(v)$ as $k\to\infty$.

    Now suppose that 
    \[\omega \in \bigcup_{B\in \mathcal{B}_U}\bigcap_{k\in\N}\bigcup_{G\in \mathcal{G}}\bigcap_{\gamma'\in \Gamma_{\mathcal{G},\mathcal{B}} }C(k,\gamma_{G,B},\gamma').\]
    Let $B\in \mathcal{B}_U$ and $(G_k)_{k\in \N}\subset \mathcal{G}$ be such that 
    \begin{equation}\label{eq:weakMinimizer}
        A^{0,T}(\gamma_{G_k,B}) \le A^{0,T}(\gamma') + 1/k
    \end{equation}
    for all $k\in \N$ and all $\gamma'\in \Gamma_{\mathcal{G},\mathcal{B}}.$ By \Cref{lem:weakContinuityofJ}, there is a subsequence of $(\gamma_{G_k,B})_{k\in \N}$ that converges in the weak $W^{1,1}$ topology to some $\gamma^* \in \Gamma_{0,Tv}^{0,T}.$ But since strongly close convex subsets are weakly closed, $\gamma^*\in \overline{B}\subset U.$ Furthermore, by $W^{1,1}$ weak sequential lower semicontinuity of $A^{0,T}$ and \eqref{eq:weakMinimizer}, we must have 
    \[A^{0,T}(\gamma^*) \le A^{0,T}(\gamma')\]
    for all $\gamma'\in \Gamma_{\mathcal{G},\mathcal{B}}.$ As remarked earlier, there is always a sequence in $\Gamma_{\mathcal{G},\mathcal{B}}$ whose action converges to the minimal action over $\Gamma_{0,Tv}^{0,T},$ and so $\gamma^*$ must in fact be a minimizer over $\Gamma_{0,Tv}^{0,T}$ and hence $\omega \in U_{\Psi^{-1}}.$ We have now shown \eqref{eq:UpsiMeasurable}, and hence $\Psi$ is weakly measurable. Existence of a measurable selection $\gamma^T(v)$ then follows.

    Recall the map $\Xi_v^*$ defined in \eqref{eq:shearDef}. We have by definition of $B_*^T(v)$ and \Cref{lem:shearActionF},
    \begin{equation}\label{BnTransformation}
        B_*^n(v)(\Xi_{-v}^*\mathbf{N}) = A_*^n(v)( \mathbf{N}).
    \end{equation}
    Define
    \[\psi^n(v)(\mathbf{N}) = \Xi_{-v}\gamma^n(v)(\Xi_{-v}^* \mathbf{N}).\]
    Equality \eqref{BnTransformation} and the minimization property of $\gamma^T(v)$ imply that 
    \[B^n(v,\psi^n(v)) = B_*^n(v).\]
    Equality \eqref{shearedEnvironmentEquality} follows by choice of $\psi^n(v)$ and the fact that $\Xi_{-v}^*\mathbf{N}  \stackrel{d}{=} \mathbf{N}.$
\end{proof}

\bibliographystyle{alpha} 
\bibliography{Burgers,polymer}

\end{document}